\DeclareMathOperator{\Hom}{Hom}
\DeclareMathOperator{\Bil}{Bil}
\DeclareMathOperator{\Inf}{Inf}
\DeclareMathOperator{\Tra}{Tra}
\DeclareMathOperator{\Res}{Res}
\DeclareMathOperator{\ima}{Im}
\newcommand{\F}{\mathbb{F}}
\newcommand{\E}{\varepsilon}
\newcommand{\U}{\omega}
\newcommand{\vp}{\varphi}
\newcommand{\LL}{L}
\newcommand{\cZ}{\mathcal{Z}}
\newcommand{\cB}{\mathcal{B}}
\newcommand{\cH}{\mathcal{H}}
\newcommand{\vv}{_{\vdash}}
\newcommand{\dd}{_{\dashv}}
\newcommand{\FR}{F\lozenge R + R\lozenge F}
\newtheorem{thm}{Theorem}[section]
\newtheorem{lem}[thm]{Lemma}
\newtheorem{cor}[thm]{Corollary}
\theoremstyle{definition}
\newtheorem{ex}[subsection]{Example}
\theoremstyle{remark}
\title{Multipliers and Unicentral Diassociative Algebras}
\author{Erik Mainellis}
\date{}
\begin{document}

\maketitle

\begin{abstract}
    The work of this paper can be divided into three primary undertakings. We first prove that covers of diassociative algebras are unique. Second, we show that the multiplier of a diassociative algebra is characterized by the second cohomology group with coefficients in the field. Third, we establish criteria for when the center of a cover maps onto the center of the algebra. Along the way, we obtain a collection of exact sequences, characterizations, and a brief theory of unicentral diassociative algebras and stem extensions. This paper is part of an ongoing project to advance extension theory in the context of several Loday algebras.
\end{abstract}

\section{Introduction}
The objective of this paper is to develop diassociative analogues of Chapters 1, 3, and 4 from Peggy Batten's 1993 dissertation \cite{batten}. Therein, the author obtained results concerning the multipliers and covers of Lie algebras, as discussed in \cite{mainellis batten}. These chapters were generalized to Leibniz algebras in \cite{rogers} and \cite{mainellis batten}. This research highlights an extension-theoretic crossroads between multipliers and covers, unicentral algebras, and the second cohomology group. Some of the results in the present paper hold by the same logic as their Lie or Leibniz versions, and so we will sometimes refer to those proofs rather than rewriting them.

Let $\F$ be a field. A \textit{diassociative algebra} $L$ is an $\F$-vector space equipped with two associative bilinear products $\dashv$ and $\vdash$ which satisfy
\begin{align*}
    x\dashv (y\dashv z) = x\dashv (y\vdash z),\\
    (x\vdash y)\dashv z = x\vdash (y\dashv z),\\
    (x\dashv y)\vdash z = (x\vdash y)\vdash z.
\end{align*}
for all $x,y,z\in L$. Let $A$ and $B$ be diassociative algebras. An \textit{extension} of $A$ by $B$ is a short exact sequence $0\xrightarrow{} A\xrightarrow{\sigma} L\xrightarrow{\pi} B\xrightarrow{} 0$ for which $\sigma$ and $\pi$ are homomorphisms and $L$ is a diassociative algebra. In general, one may assume that $\sigma$ is the identity map, and we make this assumption throughout the paper. An extension is called \textit{central} if $A\subseteq Z(L)$. A \textit{section} of an extension is a linear map $\mu:B\xrightarrow{} L$ such that $\pi\circ \mu = \text{id}_B$. We denote by $A\lozenge B$ the algebra $A\dashv B + A\vdash B$.

\section{Existence of Universal Elements and Unique Covers}\label{batten 1}
Let $L$ be a diassociative algebra. A pair of diassociative algebras $(K,M)$ is a \textit{defining pair} for $L$ if $L\cong K/M$ and $M\subseteq Z(K)\cap K'$. Such a pair is a \textit{maximal defining pair} if the dimension of $K$ is maximal. In this case, $K$ is called a \textit{cover} of $L$ and $M$ is called the \textit{multiplier} of $L$, denoted by $M(L)$. The multiplier is abelian and thus unique via dimension. The main objective of this section is to prove the uniqueness of covers in the diassociative setting. The initial dimension bounds are notably different from the Lie and Leibniz cases, as there are simply more possible multiplications for which to account.

\begin{lem}\label{dias batten 1.1}
For any diassociative algebra $K$, if $\dim(K/Z(K)) = n$, then $\dim(K')\leq 2n^2$.
\end{lem}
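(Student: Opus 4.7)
The plan is to follow the classical Schur argument, adapted to accommodate both diassociative products. In the Lie setting one obtains $\dim(L')\leq \binom{n}{2}$ by picking coset representatives $x_1,\dots,x_n$ of a basis of $L/Z(L)$, noting that $L'$ is spanned by the $[x_i,x_j]$, and counting. Here the derived subalgebra $K'$ is $K\lozenge K = K\dashv K + K\vdash K$, so we will get two families of spanning products instead of one, leading to a bound of $2n^2$ rather than $n^2$.

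First I would fix a basis $\{x_1,\dots,x_n\}$ of a linear complement to $Z(K)$ in $K$, so that every element of $K$ decomposes as a linear combination of the $x_i$ plus a central element. Next I would show that for arbitrary $a,b\in K$ the products $a\dashv b$ and $a\vdash b$ lie in the span of $\{x_i\dashv x_j\}\cup\{x_i\vdash x_j\}$. Writing $a=\sum\alpha_i x_i + z_a$ and $b=\sum\beta_j x_j + z_b$ with $z_a,z_b\in Z(K)$, bilinearity expands each product into four pieces, and the pieces containing a central factor vanish because $Z(K)$ annihilates $K$ under both products (which is the working definition of the center in this setting; it is precisely the kernel of the left and right multiplication operators for $\dashv$ and $\vdash$). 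Thus
\begin{align*}
    a\dashv b &= \sum_{i,j}\alpha_i\beta_j\,(x_i\dashv x_j),\\
    a\vdash b &= \sum_{i,j}\alpha_i\beta_j\,(x_i\vdash x_j).
\end{align*}

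Finally I would conclude that $K' = K\dashv K + K\vdash K$ is spanned by the $2n^2$ products $x_i\dashv x_j$ and $x_i\vdash x_j$ for $1\leq i,j\leq n$, giving $\dim(K')\leq 2n^2$. The only genuinely diassociative-specific point to check is the annihilation property of $Z(K)$ on both sides and for both products; unlike the Lie case there is no antisymmetry to exploit, which is why we do not gain the extra factor of $\tfrac12$. I expect this centrality/annihilation verification to be the main (and only real) obstacle, after which the dimension count is immediate.
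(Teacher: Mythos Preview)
Your proposal is correct and follows essentially the same approach as the paper: choose representatives $x_1,\dots,x_n$ for a basis of $K/Z(K)$ and observe that $K'$ is spanned by the $2n^2$ products $x_i\dashv x_j$ and $x_i\vdash x_j$. The paper states this in one line, whereas you spell out the bilinear expansion and the vanishing of the central cross-terms; note that the latter is simply the definition of $Z(K)$ in a diassociative algebra, so it is not really an obstacle at all.
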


\begin{proof}
Let $\{\overline{x_1},\overline{x_2},\dots, \overline{x_n}\}$ be a basis for $K/Z(K)$. Then $\{x_i\dashv x_j, x_i\vdash x_j~|~ 1\leq i,j\leq n\}$ is a generating set for $K'$. Thus $\dim(K')\leq 2n^2$.
\end{proof}

\begin{lem}\label{dias batten 1.2}
Let $L$ be a finite-dimensional diassociative algebra with $\dim L = n$ and let $K$ be the first term in a defining pair for $L$. Then $\dim K\leq n(2n+1)$.
\end{lem}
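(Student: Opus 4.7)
The plan is to bound $\dim M$ using the previous lemma and then add $\dim(K/M)$. By the definition of defining pair, $L \cong K/M$, so $\dim(K/M) = n$ and thus $\dim K = n + \dim M$. The task reduces to showing $\dim M \leq 2n^2$.

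The key observation is that $M$ sits inside both $Z(K)$ and $K'$ by assumption. The containment $M \subseteq Z(K)$ means that $Z(K)/M$ is a subspace of $K/M \cong L$, so by the third isomorphism theorem $K/Z(K)$ is a quotient of $K/M$, giving the dimension bound $\dim(K/Z(K)) \leq n$. With this in hand, I would apply Lemma \ref{dias batten 1.1} to conclude $\dim K' \leq 2n^2$. Finally, since $M \subseteq K'$, this yields $\dim M \leq 2n^2$.

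Putting these together gives
\[
\dim K = n + \dim M \leq n + 2n^2 = n(2n+1),
\]
as desired. There is no real obstacle here: the argument is a direct application of Lemma \ref{dias batten 1.1} combined with the two defining conditions $M \subseteq Z(K)$ and $M \subseteq K'$, which between them control the codimension and the dimension of $M$ respectively. The only subtlety worth noting is that the bound $2n^2$ (rather than the Lie/Leibniz analogues) arises precisely because diassociative algebras carry two bilinear products, doubling the size of the generating set for $K'$ used in Lemma \ref{dias batten 1.1}.
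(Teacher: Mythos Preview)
Your proof is correct and follows essentially the same approach as the paper: bound $\dim(K/Z(K))\leq n$ from $M\subseteq Z(K)$, apply Lemma~\ref{dias batten 1.1} to get $\dim K'\leq 2n^2$, then use $M\subseteq K'$ to conclude $\dim M\leq 2n^2$ and hence $\dim K\leq n+2n^2$. Your added remark about the factor of $2$ coming from the two products is accurate and matches the paper's commentary surrounding the lemma.
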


\begin{proof}
We know that $\dim(K/Z(K)) \leq \dim(K/M) = \dim L = n$ since $M\subseteq Z(K)$. Therefore, $\dim M\leq \dim(K')\leq 2n^2$ via Lemma \ref{dias batten 1.1} since $M\subseteq K'$. We thus have $\dim K = \dim L + \dim M \leq n+2n^2 = n(2n+1)$.
\end{proof}

These facts ensure that the members of any defining pair for a finite-dimensional diassociative algebra $L$ have bounded dimension. Example \ref{diassociative dimension bound obtained} illustrates that the highest possible dimension bounds of Lemmas \ref{dias batten 1.1} and \ref{dias batten 1.2} can always be obtained.

\begin{ex}\label{diassociative dimension bound obtained}
Let $L$ be the $n$-dimensional abelian diassociative algebra with basis $\{x_i\}_{i=1,\dots,n}$ and let $M$ be the $2n^2$-dimensional abelian diassociative algebra with basis $\{m_{ij}, s_{ij}\}_{i,j = 1,\dots n}$. Let $K$ denote the vector space $M\oplus L$ with only nonzero multiplications given by $x_i\dashv x_j = m_{ij}$ and $x_i\vdash x_j = s_{ij}$ for $i,j=1,\dots, n$. Then $K$ is a diassociative algebra of dimension $n + 2n^2$ and $M = Z(K) = K'$. Clearly $K$ is a cover of $L$ and $M$ is the multiplier since we have maximal possible dimension. Noting that $L=K/Z(K)$, we also obtain $\dim K' = 2n^2$.
\end{ex}

Let $C(L)$ denote the set of all pairs $(J,\lambda)$ such that $\lambda:J\xrightarrow{} L$ is a surjective homomorphism and $\ker \lambda\subseteq J'\cap Z(J)$. An element $(T,\tau)\in C(L)$ is called a \textit{universal element} in $C(L)$ if, for any $(J,\lambda)\in C(L)$, there exists a homomorphism $\beta:T\xrightarrow{} J$ such that the diagram \[\begin{tikzcd}
T\arrow[r, "\tau"]\arrow[d,swap, "\beta"]& L\\
J \arrow[ur, swap, "\lambda"]
\end{tikzcd}\] commutes, i.e. such that $\lambda\circ \beta = \tau$.

Defining pairs for $L$ correspond to elements of $C(L)$ in a natural way. Indeed, any $(K,\lambda)\in C(L)$ gives rise to a defining pair $(K,\ker \lambda)$. Conversely, any defining pair $(K,M)$ yields a surjective homomorphism $\lambda:K\xrightarrow{} L$ such that $\ker \lambda = M\subseteq Z(K)\cap K'$, and thus $(K,\lambda)\in C(L)$. We will show that a pair $(T,\tau)\in C(L)$ is a universal element if and only if $T$ is a cover.

\begin{lem}\label{dias batten 1.3}
Let $K$ be a finite dimensional diassociative algebra. Then $Z(K)\cap K'$ is contained in every maximal subalgebra of $K$.
\end{lem}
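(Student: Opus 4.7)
The plan is a standard maximality argument. Fix a maximal subalgebra $H$ of $K$ and set $Z := Z(K)\cap K'$. The goal is to show that $H + Z$ is a subalgebra of $K$; then if $Z \not\subseteq H$, maximality of $H$ forces $H + Z = K$, and a contradiction can be derived by examining $K'$.

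First, I would check that $H + Z$ is closed under both $\dashv$ and $\vdash$. Every element of $Z$ lies in $Z(K)$, and therefore annihilates all of $K$ on both sides under both products. Consequently, for any $h_1, h_2 \in H$ and $z_1, z_2 \in Z$, every cross-term in $(h_1 + z_1)\dashv(h_2 + z_2)$ and in $(h_1 + z_1)\vdash(h_2 + z_2)$ vanishes, leaving only $h_1\dashv h_2$ and $h_1\vdash h_2$, each of which lies in $H$. Hence $H + Z$ is a subalgebra of $K$ containing $H$.

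Now suppose for contradiction that $Z \not\subseteq H$. Then $H \subsetneq H + Z$, so maximality of $H$ forces $H + Z = K$. Applying the same cancellation of cross-terms to arbitrary products in $K = H + Z$ gives $K' = H'$, so in particular $K' \subseteq H$. But then $Z \subseteq K' \subseteq H$, contradicting the assumption.

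The only real subtlety is the centrality computation in the first step, which relies on the natural definition of $Z(K)$ as the set of elements annihilating $K$ on both sides under both products; given that convention, no diassociative identities enter the argument at all. I therefore foresee no serious obstacle, and the proof proceeds essentially verbatim to the Lie and Leibniz analogues.
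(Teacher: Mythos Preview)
Your proof is correct and follows essentially the same approach as the paper's own argument: both show that $H+Z$ (the paper writes $M+A$) is a subalgebra via centrality of $Z$, invoke maximality to get $H+Z=K$ in the bad case, and then compute $K'=H'\subseteq H$ to force $Z\subseteq H$, a contradiction. The only cosmetic difference is that the paper phrases the dichotomy as ``$A+M=K$ or $M$'' rather than explicitly assuming $Z\not\subseteq H$, but the logic is identical.
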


\begin{proof}
Let $M$ be a maximal subalgebra of $K$ and let $A= Z(K)\cap K'$. Then $A+M$ is also subalgebra of $K$, which implies that $A+M= K$ or $M$. Suppose $A+M=K$. Then \begin{align*}
    K' &= K\dashv K + K\vdash K \\ &= A \lozenge A + A\lozenge M + M\lozenge A + M\lozenge M \\ &= M\dashv M + M\vdash M \\&= M' \subseteq M.
\end{align*} Therefore $Z(K)\cap K'\subseteq M$, a contradiction.
\end{proof}

\begin{lem}\label{dias batten 1.4}
Let $(J,\lambda)\in C(L)$ and $\mu:K\xrightarrow{} L$ be a surjective homomorphism. Suppose there is a homomorphism $\beta:K\xrightarrow{} J$ such that the diagram \[\begin{tikzcd}
K\arrow[r, "\mu"]\arrow[d,swap, "\beta"]& L\\
J \arrow[ur, swap, "\lambda"]
\end{tikzcd}\] commutes, i.e. such that $\lambda\circ \beta = \mu$. Then $\beta$ is surjective.
\end{lem}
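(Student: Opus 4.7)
The plan is to argue by contradiction using Lemma \ref{dias batten 1.3}. Suppose $\beta$ is not surjective, so that $\beta(K)$ is a proper subalgebra of $J$. In the finite-dimensional setting, $\beta(K)$ is then contained in some maximal subalgebra $M$ of $J$.

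Next I would exploit the surjectivity of $\mu$ to show $J = \beta(K) + \ker\lambda$. Given any $j\in J$, surjectivity of $\mu = \lambda\circ\beta$ yields some $k\in K$ with $\mu(k) = \lambda(j)$, whence $\lambda(j - \beta(k)) = 0$ and $j\in \beta(K) + \ker\lambda$. Since $(J,\lambda)\in C(L)$, we have $\ker\lambda\subseteq Z(J)\cap J'$, and by Lemma \ref{dias batten 1.3} this intersection lies in every maximal subalgebra of $J$, in particular in $M$. Combining these gives
\[
J = \beta(K) + \ker\lambda \subseteq M + M = M,
\]
which contradicts the fact that $M$ is a proper subalgebra of $J$. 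Therefore $\beta(K) = J$.

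The only genuinely delicate point is the appeal to a maximal subalgebra containing $\beta(K)$, which implicitly requires $J$ (and thus $K$) to be finite-dimensional; this is consistent with the standing hypotheses of the section. The rest of the argument is a direct, structural use of the defining property of $C(L)$ together with Lemma \ref{dias batten 1.3}, and I do not expect any obstacle beyond assembling the inclusions correctly.
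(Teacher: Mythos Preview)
Your proof is correct and essentially identical to the paper's: both first obtain $J = \beta(K) + \ker\lambda$ from the commutativity of the diagram and surjectivity of $\mu$, then use Lemma~\ref{dias batten 1.3} to force any maximal subalgebra containing $\beta(K)$ to also contain $\ker\lambda\subseteq Z(J)\cap J'$, yielding the contradiction $J\subseteq M$. Your remark about the implicit finite-dimensionality needed to produce a maximal subalgebra is apt and matches the standing hypotheses.
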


\begin{proof}
Let $j\in J$ and $\lambda(j) = \mu(k)$ for some $k\in K$. Then the equality $\mu(k) = \lambda\circ \beta(k) = \lambda(j)$ yields an element $\beta(k)-j\in \ker \lambda$ and so $J = \ker \lambda + \ima \beta$. By assumption, $\ker \lambda\subseteq Z(J)\cap J'$, where $Z(J)\cap J'$ is contained in every maximal subalgebra of $J$ by Lemma \ref{dias batten 1.3}. Suppose that $\ima \beta\neq J$. Then $\ima \beta$ is contained in some maximal subalgebra $M$ of $J$ which is not equal to $J$, and so $\ima \beta + (Z(J)\cap J')\subseteq M$. But this implies that $M=J$, a contradiction. Thus $\ima \beta = J$.
\end{proof}

Suppose there is an element $(K,\eta)\in C(L)$ such that, for all $(J,\lambda)\in C(L)$, there exists a homomorphism $\rho:K\xrightarrow{} J$ which satisfies $\lambda\circ \rho = \eta$. Then Lemma \ref{dias batten 1.4} implies that $\rho$ is surjective, and so $\dim J\leq \dim K$. Hence $K$ is a cover of $L$ since its dimension is maximal. Moreover, any other cover of $L$ has the same dimension as $K$ and is the homomorphic image of $K$, and so must be isomorphic to $K$. We have thus shown the following statement.

\begin{lem}
If there exists a universal element $(K,\eta)\in C(L)$, then all covers of $L$ are isomorphic.
\end{lem}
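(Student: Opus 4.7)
The plan is to assemble the argument already sketched in the paragraph preceding the lemma into a self-contained proof, leveraging only the universal property of $(K,\eta)$ and Lemma \ref{dias batten 1.4}. The two things to establish are: (i) $K$ itself is a cover of $L$, i.e. it has maximal dimension among first components of defining pairs for $L$, and (ii) every other cover of $L$ is isomorphic to $K$.

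For (i), I would take an arbitrary $(J,\lambda)\in C(L)$ and invoke the universal property to obtain a homomorphism $\rho:K\to J$ with $\lambda\circ \rho = \eta$. Applying Lemma \ref{dias batten 1.4} (with the roles $\mu = \eta$ and the surjective homomorphism being $\eta:K\to L$) tells us $\rho$ is surjective, so $\dim J\leq \dim K$. Because the defining pairs for $L$ correspond to elements of $C(L)$ via $(J,\lambda)\mapsto (J,\ker\lambda)$, this inequality says exactly that $K$ realizes the maximal dimension of any first component of a defining pair; hence $K$ is a cover of $L$.

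For (ii), let $K'$ be any other cover of $L$ with associated $(K',\lambda')\in C(L)$, where $\ker\lambda' = M(L)\subseteq Z(K')\cap (K')'$. Universality of $(K,\eta)$ supplies a homomorphism $\rho:K\to K'$ with $\lambda'\circ\rho = \eta$, and Lemma \ref{dias batten 1.4} again forces $\rho$ to be surjective. Since $K'$ is also a cover, $\dim K' = \dim K$ by (i), so the surjection $\rho$ is an isomorphism of vector spaces; being already a homomorphism of diassociative algebras, it is an isomorphism of diassociative algebras.

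There is no serious obstacle here: the conclusion follows formally once the universal property is combined with Lemma \ref{dias batten 1.4}. The only point requiring a moment of care is keeping straight that the universal element $K$ must itself be shown to be a cover before one can compare dimensions with an arbitrary other cover $K'$; this is handled in step (i) by applying the universal property to a general $(J,\lambda)\in C(L)$ rather than only to covers.
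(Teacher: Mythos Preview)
Your proposal is correct and follows essentially the same approach as the paper: use the universal property together with Lemma \ref{dias batten 1.4} to show every $(J,\lambda)\in C(L)$ admits a surjection from $K$, hence $K$ has maximal dimension and is a cover, and then any other cover is a homomorphic image of $K$ of the same dimension, hence isomorphic to $K$. Your write-up is a slightly more explicit unpacking of the paragraph the paper places just before the lemma.
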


It remains to show that universal elements exist in the diassociative setting. We begin by fixing a free presentation $0\xrightarrow{} R\xrightarrow{} F\xrightarrow{\pi} L\xrightarrow{} 0$ of $L$ and assigning \[B= \frac{R}{\FR} \hspace{2cm} C = \frac{F}{\FR} \hspace{2cm} D=\frac{F'\cap R}{\FR}\] for ease of notation. Then $C\lozenge B+B\lozenge C = 0$ and $D$ is a central ideal in $C$. Thus $\pi$ induces a homomorphism $\overline{\pi}:C\xrightarrow{} L$ with kernel $B$. We will show that there is a central ideal \[E = \frac{S}{\FR}\] of $C$, complementary to $D$ in $B$, such that $(C/E,\pi_S)\in C(L)$ is a universal element, where $\pi_S$ is induced by $\overline{\pi}$. Consider $(J,\lambda)\in C(L)$. By the universal property of our free diassociative algebra $F$, there exists a homomorphism $\sigma:F\xrightarrow{} J$ such that the diagram \[\begin{tikzcd}
F\arrow[r, "\pi"]\arrow[d,swap, "\sigma"]& L\\
J \arrow[ur, swap, "\lambda"]
\end{tikzcd}\] commutes, i.e. such that $\pi = \lambda\circ \sigma$, as seen in the Lie and Leibniz cases. The following lemma yields a commutative diagram \[\begin{tikzcd}
C\arrow[r, "\overline{\pi}"]\arrow[d,swap, "\overline{\sigma}"]& L\\
J \arrow[ur, swap, "\lambda"]
\end{tikzcd}\] where $\overline{\sigma}$ is induced by $\sigma$.

\begin{lem}\label{dias batten 1.6}
Let $x\in F$. Then $x\in R$ if and only if $\sigma(x)\in \ker \lambda$. Also, $\FR\subseteq \ker \sigma$, and thus $\sigma$ induces a homomorphism $\overline{\sigma}:C\xrightarrow{} J$ which is surjective and satisfies $\lambda\circ \overline{\sigma} = \overline{\pi}$.
\end{lem}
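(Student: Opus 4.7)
The proof naturally splits into three pieces: a biconditional characterization of $R$ via $\sigma$, the inclusion $\FR\subseteq \ker\sigma$, and the resulting properties of the induced map $\overline{\sigma}$. The first is a diagram chase, the second leverages centrality, and the third is formal. For the biconditional, commutativity $\pi = \lambda\circ\sigma$ gives $\lambda(\sigma(x)) = \pi(x)$ for every $x\in F$: if $x\in R = \ker\pi$ then $\sigma(x)\in \ker\lambda$, and if $\sigma(x)\in\ker\lambda$ then $\pi(x) = 0$ forces $x\in R = \ker\pi$.

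For $\FR\subseteq \ker\sigma$, the key observation is that $\sigma(R)\subseteq \ker\lambda$ by the first step, while the hypothesis $(J,\lambda)\in C(L)$ gives $\ker\lambda \subseteq Z(J)\cap J'$. Thus $\sigma(r)\in Z(J)$ for every $r\in R$, and since $\sigma$ is a diassociative homomorphism, each of the four products $\sigma(f)\dashv\sigma(r)$, $\sigma(r)\dashv\sigma(f)$, $\sigma(f)\vdash\sigma(r)$, $\sigma(r)\vdash\sigma(f)$ vanishes for any $f\in F$. Consequently $\sigma$ annihilates $F\dashv R$, $R\dashv F$, $F\vdash R$, and $R\vdash F$, giving $\FR\subseteq \ker\sigma$.

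The remaining claims are now formal. Since $\FR \subseteq \ker\sigma$, the map $\sigma$ factors through the quotient $C = F/\FR$ to yield a well-defined homomorphism $\overline{\sigma}:C\to J$; evaluating on a coset $\overline{f}$ gives $\lambda(\overline{\sigma}(\overline{f})) = \lambda(\sigma(f)) = \pi(f) = \overline{\pi}(\overline{f})$, so $\lambda\circ\overline{\sigma} = \overline{\pi}$. For surjectivity I would invoke Lemma \ref{dias batten 1.4} with $\mu = \overline{\pi}$ and $\beta = \overline{\sigma}$; this applies because $\overline{\pi}$ is surjective (inherited from $\pi$) and $(J,\lambda)\in C(L)$. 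No step is a genuine obstacle, but the essential conceptual content is recognizing that the centrality inclusion $\ker\lambda\subseteq Z(J)$ kills all four mixed products uniformly, which is exactly what is needed to factor $\sigma$ through $C$.
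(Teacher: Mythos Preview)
Your proof is correct and follows essentially the same route as the paper: the biconditional via $\pi=\lambda\circ\sigma$, the inclusion $\FR\subseteq\ker\sigma$ via $\sigma(R)\subseteq\ker\lambda\subseteq Z(J)$, and the induced map with $\lambda\circ\overline{\sigma}=\overline{\pi}$ all match the paper's argument. The only cosmetic difference is in the surjectivity step: the paper asserts $\overline{\sigma}$ is surjective ``since $\sigma$ is surjective'' (which itself tacitly rests on Lemma~\ref{dias batten 1.4} applied to $F\xrightarrow{\pi}L$), whereas you apply Lemma~\ref{dias batten 1.4} directly to $\overline{\sigma}$ and $\overline{\pi}$; both are valid and amount to the same thing.
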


\begin{proof}
Let $x\in F$. If $x\in R$, then $x\in \ker \pi$, which implies that $0=\pi(x) = \lambda\circ \sigma(x)$. Therefore $\sigma(x)\in \ker \lambda$. Conversely, if $\sigma(x)\in \ker \lambda$, then $0 = \lambda\circ \sigma(x) = \pi(x)$, which implies that $x$ is an element of $\ker \pi = R$. Now consider $r\dashv f\in \FR$. Then $\sigma(r\dashv f) = \sigma(r)\dashv \sigma(f) = 0$ since $\sigma(r)\in \ker \lambda\subseteq Z(J)\cap J'$ and $\sigma(f)\in J$. The cases of $r\vdash f$, $f\dashv r$, and $f\vdash r$ are similar, and so we have our homomorphism $\overline{\sigma}:C\xrightarrow{} J$, induced by $\sigma$, which is surjective since $\sigma$ is surjective. One computes \begin{align*}
    \lambda\circ \overline{\sigma}(f+(\FR)) &= \lambda\circ \sigma(f) \\ &= \pi(f) \\ &= \overline{\pi}(f+(\FR))
\end{align*} and thus $\lambda\circ \overline{\sigma} = \overline{\pi}$.
\end{proof}

\begin{lem}\label{batten combo}
$\overline{\sigma}(B) = \ker \lambda = \overline{\sigma}(D)$, from which it follows $B = D+\ker \overline{\sigma}$.
\end{lem}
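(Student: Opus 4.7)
The strategy is to verify both $\overline{\sigma}(B) = \ker\lambda$ and $\overline{\sigma}(D) = \ker\lambda$ as independent equalities, after which $B = D + \ker\overline{\sigma}$ will drop out by a diagram chase. Three of the four inclusions are essentially formal and follow from $\lambda\circ\overline{\sigma} = \overline{\pi}$ (Lemma \ref{dias batten 1.6}) combined with surjectivity of $\overline{\sigma}$. Specifically, $\overline{\sigma}(B)\subseteq \ker\lambda$ holds because $B = \ker\overline{\pi}$; conversely, given $y\in \ker\lambda$, writing $y = \overline{\sigma}(c)$ via surjectivity forces $\overline{\pi}(c) = \lambda(y) = 0$, so $c\in B$. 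Since $D\subseteq B$, we also immediately obtain $\overline{\sigma}(D)\subseteq \ker\lambda$.

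The core of the argument is the remaining inclusion $\ker\lambda\subseteq \overline{\sigma}(D)$. Here I would invoke the standing hypothesis $(J,\lambda)\in C(L)$, which places $\ker\lambda$ inside $J'\cap Z(J)$. Given $y\in \ker\lambda$, I would use the fact that $y\in J'$ to write it as $y = \sum_i a_i\dashv b_i + \sum_j c_j\vdash d_j$ with $a_i,b_i,c_j,d_j\in J$, then lift each factor through the surjection $\overline{\sigma}:C\to J$ to assemble a preimage $z\in C'$ satisfying $\overline{\sigma}(z) = y$. Commutativity gives $\overline{\pi}(z) = \lambda(y) = 0$, so $z\in C'\cap B$. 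It then remains to identify this intersection with $D$: because $\FR\subseteq F'$, one has $C' = F'/(\FR)$ and $B = R/(\FR)$, so $C'\cap B = (F'\cap R)/(\FR) = D$, exactly as needed.

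With both equalities established, the final claim $B = D + \ker\overline{\sigma}$ is nearly tautological. For any $b\in B$, the identity $\overline{\sigma}(B) = \overline{\sigma}(D)$ produces a $d\in D$ with $\overline{\sigma}(b) = \overline{\sigma}(d)$, hence $b - d\in \ker\overline{\sigma}$ and $b = d + (b-d)$. The reverse containment is automatic since $\ker\overline{\sigma}\subseteq \ker(\lambda\circ\overline{\sigma}) = \ker\overline{\pi} = B$ and $D\subseteq B$. The principal obstacle is the inclusion $\ker\lambda\subseteq \overline{\sigma}(D)$ in the second paragraph: one must exhibit a preimage of $y$ that simultaneously lies in the derived subalgebra of $C$ \emph{and} in $\ker\overline{\pi}$, and the trick is to exploit $\ker\lambda\subseteq J'$ to perform the decomposition into products \emph{before} lifting, so that the lift automatically lands in $C' = F'/(\FR)$.
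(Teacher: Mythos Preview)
Your proposal is correct and follows essentially the same approach as the referenced Lemmas 1.7--1.9 of \cite{batten}: the three easy inclusions come from $\lambda\circ\overline{\sigma}=\overline{\pi}$ and surjectivity, while the substantive inclusion $\ker\lambda\subseteq\overline{\sigma}(D)$ is obtained exactly by decomposing $y\in\ker\lambda\subseteq J'$ into products, lifting the factors through $\overline{\sigma}$ to land in $C'$, and then identifying $C'\cap B$ with $D$. The final deduction $B=D+\ker\overline{\sigma}$ is likewise the standard chase.
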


\begin{proof}
This lemma combines the results of Lemmas 1.7, 1.8, and 1.9 in \cite{batten}, which follow similarly to the Lie case.
\end{proof}

\begin{lem}\label{dias batten 1.5}
$(C/E,B/E)$ is a defining pair for $L$ where $E$ is a central ideal in $C$ that is complementary to $D$ in $B$.
\end{lem}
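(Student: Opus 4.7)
The plan is to verify the two defining conditions --- that $L \cong (C/E)/(B/E)$ and that $B/E \subseteq Z(C/E) \cap (C/E)'$ --- after first justifying that a suitable $E$ exists.

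For existence of $E$, observe that since we quotient by $\FR$, every product of the form $c \lozenge b$ or $b \lozenge c$ with $b \in B$ and $c \in C$ vanishes. Thus $B$ is itself a central ideal in $C$, and any vector-space complement to $D$ inside $B$ is automatically a central ideal in $C$. Such a complement exists by elementary linear algebra, and we may write it as $E = S/\FR$ for some $S$ with $\FR \subseteq S \subseteq R$.

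For the isomorphism $L \cong (C/E)/(B/E)$, I would simply invoke the third isomorphism theorem: $(C/E)/(B/E) \cong C/B$, and $C/B \cong L$ via $\overline{\pi}$ since Lemma \ref{dias batten 1.6} (or the construction preceding it) identifies $\ker \overline{\pi} = B$. The centrality $B/E \subseteq Z(C/E)$ is immediate from the already-observed fact that $B \subseteq Z(C)$.

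The step requiring a touch of care is showing $B/E \subseteq (C/E)'$. Here I would use that $B = D + E$ with $D \cap E = 0$, so the natural map $D \to B/E$ is a vector-space isomorphism; hence every element of $B/E$ is represented by an element of $D$. Since $D = (F' \cap R)/\FR$ is contained in the image of $F'$ in $C$, and that image is precisely $C'$, we get $D \subseteq C'$, so the image of $D$ in $C/E$ lies in $(C/E)'$. This image is exactly $B/E$, giving the desired containment. The main obstacle, such as it is, lies only in keeping track of which quotient we are working in at each step; the three properties themselves fall out of the bookkeeping once the centrality of $B$ in $C$ and the containment $D \subseteq C'$ are recorded.
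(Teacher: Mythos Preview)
Your proposal is correct and follows essentially the same route as the paper's proof: both use the third isomorphism theorem to get $(C/E)/(B/E)\cong C/B\cong L$, both derive $B/E\subseteq Z(C/E)$ from $B\subseteq Z(C)$, and both obtain $B/E\subseteq (C/E)'$ by noting $D\subseteq C'$ together with $B=D\oplus E$. Your added paragraph on the existence of $E$ is not in the paper's proof (the lemma simply hypothesizes such an $E$), but it is correct and harmless.
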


\begin{proof}
We first compute \[\frac{C/E}{B/E}\cong C/B\cong F/R\cong L\] and thus the first axiom of defining pairs is satisfied. Next, we know that $B\subseteq Z(C)$, and so \[B/E\subseteq Z(C/E).\] Finally, \[D= \frac{F'\cap R}{\FR} \subseteq \frac{F'}{\FR} \cong \Big(\frac{F}{\FR}\Big)' = C'\] implies that \[B/E\cong \frac{D\oplus E}{E} \subseteq \frac{C'+E}{E} \cong (C/E)'.\] Therefore $B/E\subseteq Z(C/E)\cap (C/E)'$.
\end{proof}

Lemma \ref{batten combo} allows us to choose a subspace $E$, complementary to $D$ in $B$, which is contaied in $\ker \overline{\sigma}$. Thus, given an element $(J,\lambda)\in C(L)$, our $\overline{\sigma}$ induces a homomorphism $\sigma_S:C/E\xrightarrow{} J$ such that the diagram \[\begin{tikzcd}
C/E\arrow[r, "\pi_S"]\arrow[d,swap, "\sigma_S"]& L\\
J \arrow[ur, swap, "\lambda"]
\end{tikzcd}\] commutes, i.e. such that $\lambda\circ \sigma_S = \pi_S$. Furthermore, by Lemma \ref{dias batten 1.5}, $(C/E,B/E)$ is a defining pair for $L$. Specializing this discussion to when $J$ is a cover of $L$, we now prove that $C/E$ is a cover and thereby obtain a characterization of the multiplier in terms of the free presentation.

\begin{lem}
Given a cover $K$ of $L$, the corresponding $C/E$ is also a cover of $L$ and the multiplier of $L$ is \[M(L) = \frac{F'\cap R}{\FR}.\]
\end{lem}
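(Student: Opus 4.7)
The plan is to combine the defining-pair construction $(C/E,B/E)$ from Lemma \ref{dias batten 1.5} with the surjectivity result of Lemma \ref{dias batten 1.4}, using the maximality of $\dim K$ built into the definition of a cover.

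Concretely, let $K$ be a cover of $L$ with surjective homomorphism $\lambda:K\to L$ and $\ker\lambda\subseteq Z(K)\cap K'$, so that $(K,\lambda)\in C(L)$. Applying the construction preceding the lemma to this $(K,\lambda)$ produces a homomorphism $\sigma_S:C/E\to K$ satisfying $\lambda\circ\sigma_S=\pi_S$. Since $(C/E,\pi_S)\in C(L)$ by Lemma \ref{dias batten 1.5}, I would apply Lemma \ref{dias batten 1.4} with $(J,\lambda)=(K,\lambda)$ and $\mu=\pi_S$, $\beta=\sigma_S$, to conclude that $\sigma_S$ is surjective. Hence $\dim(C/E)\geq \dim K$.

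Next, I would invoke the maximality built into the definition of a cover: since $(C/E,B/E)$ is a defining pair for $L$ by Lemma \ref{dias batten 1.5}, and $K$ has maximal dimension among first terms of defining pairs, $\dim(C/E)\leq \dim K$. Combining with the previous inequality gives $\dim(C/E)=\dim K$, so $\sigma_S$ is an isomorphism of finite-dimensional diassociative algebras. Therefore $C/E$ is itself a cover of $L$.

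Finally, the multiplier corresponding to this cover is $B/E$. Because $E$ was chosen complementary to $D$ in $B$, one has $B=D\oplus E$, so $B/E\cong D=(F'\cap R)/(\FR)$. Since the multiplier of $L$ is abelian and hence determined up to isomorphism by its dimension, this proves $M(L)\cong (F'\cap R)/(\FR)$, as claimed. The only mild subtlety is the last identification, but it is immediate from the direct-sum decomposition guaranteed by Lemma \ref{batten combo} and the choice of $E$ made just before Lemma \ref{dias batten 1.5}; no further computation is required.
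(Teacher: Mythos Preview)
Your proposal is correct and follows essentially the same approach as the paper: both argue that $\sigma_S:C/E\to K$ is surjective (you make the appeal to Lemma~\ref{dias batten 1.4} explicit, the paper simply asserts $K$ is ``the homomorphic image of $C/E$''), combine this with the maximality of $\dim K$ among defining pairs to get $\dim(C/E)=\dim K$, and then read off $M(L)\cong B/E\cong D$ from the decomposition $B=D\oplus E$. Your version is slightly more detailed but there is no substantive difference.
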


\begin{proof}
If $K$ is a cover of $L$, then $\dim K\geq \dim(C/E)$ since $C/E$ is the first member of a defining pair for $L$. Since $K$ is the homomorphic image of $C/E$, we also have $\dim K\leq \dim(C/E)$ and thus $\dim K = \dim(C/E)$. This means $C/E$ is a cover of $L$. Finally, since $C/B\cong L$ and \[B/E\cong \frac{F'\cap R}{\FR},\] we have the desired expression for $M(L)$.
\end{proof}

Since $E\subseteq \ker \overline{\sigma}$, $E$ necessarily depends on $\overline{\sigma}$ and thus on $J$. As in \cite{rogers} and \cite{batten}, we will show that there is a single $C/E$ which works for all $(J,\lambda)\in C(L)$, i.e. a \textit{universal} element of the form $(C/E,\pi_S)\in C(L)$. It suffices to show that all $C/E$'s are isomorphic. To this end, we first state the following cancellation lemma, which holds by the same logic as its Lie analogue. We will then specialize $F$ so that said lemma can be applied.

\begin{lem}\label{dias batten 1.11}
Let $L= B\oplus D = B_1\oplus D_1$. If $B\cong B_1$ and $B$ is finite-dimensional, then $D\cong D_1$.
\end{lem}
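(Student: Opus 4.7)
The plan is to follow the dimension-theoretic proof of the Lie analogue in \cite{batten}. The skeleton of the argument proceeds in two steps. First, since $B$ is finite-dimensional and $B\cong B_1$, we have $\dim B = \dim B_1 < \infty$. From the direct-sum decompositions $L = B\oplus D = B_1\oplus D_1$, cancellation of this common finite dimension yields $\dim D = \dim D_1$. This cancellation is legitimate precisely because $\dim B$ is finite — in the infinite-dimensional case, subtracting a finite cardinal from $\dim L$ is well-defined, while in the finite-dimensional case it is just arithmetic. Already this gives a vector-space isomorphism $D\cong D_1$, which in the applications of interest within this paper (where $D$ and $D_1$ sit in the center, hence are abelian) is the full statement.

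Second, one promotes this to a diassociative algebra isomorphism in the general case. The standard recipe is to fix an isomorphism $\phi: B\to B_1$ and modify it via the two decompositions to produce an algebra automorphism $\Phi: L\to L$ that carries $B$ onto $B_1$. The restriction of $\Phi$ to $D$, composed with the projection $L\twoheadrightarrow D_1$ along $B_1$, then yields the required map $\psi: D\to D_1$. Multiplicativity of $\psi$ is inherited from multiplicativity of $\Phi$ together with the vanishing of cross-products in a direct sum $L = B_1\oplus D_1$ of diassociative algebras; injectivity and surjectivity follow from $\Phi$ being bijective and the dimensional cancellation above.

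The main obstacle is constructing $\Phi$ as a genuine algebra automorphism. This is the delicate part of Batten's Lie argument, where finite-dimensionality of $B$ is used critically: it allows one to absorb the discrepancy between $B$ and $B_1$ inside $L$ into a finite adjustment without disturbing products on the complementary summand. The diassociative adaptation is immediate because this construction never appeals to the Jacobi identity or any Lie-specific axiom — only bilinearity of the two products $\dashv$ and $\vdash$, associativity, and the formal direct-sum structure are used. This is precisely why the paper is justified in invoking the Lie proof directly rather than reproducing it.
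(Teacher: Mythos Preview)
The paper gives no proof here beyond the remark that the lemma ``holds by the same logic as its Lie analogue,'' so at the level of strategy your deferral to \cite{batten} matches the paper exactly.

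Where your write-up goes wrong is in the added commentary. Your parenthetical that ``$D$ and $D_1$ sit in the center, hence are abelian'' in the paper's application is backwards: in the paragraph following the lemma the roles of $B,B_1$ are played by the central ideals $E,E_1\subseteq Z(C)$, while $D,D_1$ correspond to $G,G_1$, which contain $C'$ and are not abelian in general. So the dimension count alone does \emph{not} dispose of the application; a genuine algebra isomorphism $G\cong G_1$ is what is required in order to conclude $C/E\cong G\cong G_1\cong C/E_1$ as diassociative algebras.

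Your second step does not supply one. You announce a ``standard recipe'' that produces an algebra automorphism $\Phi:L\to L$ with $\Phi(B)=B_1$ and describe the crux as ``absorb[ing] the discrepancy \dots into a finite adjustment,'' but you never define $\Phi$, never check multiplicativity for either $\dashv$ or $\vdash$, and never explain concretely where finite-dimensionality of $B$ enters. This is a description of the \emph{shape} of an argument, not an argument. If you intend to go beyond the paper's bare citation you must actually carry out Batten's construction (or give your own); otherwise the honest move is to do exactly what the paper does and cite \cite{batten} without embellishment.
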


Denote $n=\dim L$. Noting that $L$ is the homomorphic image of $F$, let $F$ be generated by $n$ elements. Then \[E\cong B/D \cong \frac{R}{F'\cap R}\cong \frac{F'+R}{F'} \subseteq F/F'\] where $F/F'$ is abelian and generated by $n$ elements, and thus $E$ is finite dimensional. Next, consider $(J,\lambda)\in C(L)$. As above, one obtains a central ideal $E_1$ in $C$, complementary to $D$ in $B$, and a homomorphism $\overline{\sigma}:C\xrightarrow{} J$ such that $E_1\subseteq \ker \overline{\sigma}$ and $\lambda\circ \overline{\sigma} = \overline{\pi}$. Since $E\cap C'$ and $E_1\cap C'$ are both zero, we may extend $D$ in two different ways. First, extend $D$ to a space $G$ such that $C=E\oplus G$. Second, extend $D$ to a space $G_1$ such that $C=E_1\oplus G_1$. Since both $E$ and $E_1$ have the same finite dimension and are abelian, we know $E\cong E_1$. By Lemma \ref{dias batten 1.11}, $G\cong G_1$. Thus $C/E\cong C/E_1$.

In conclusion, we have shown that, given a free presentation $0\xrightarrow{} R\xrightarrow{} F\xrightarrow{\pi} L\xrightarrow{} 0$ and any $(J,\lambda)\in C(L)$, one can choose a subspace $E$ in $C$ and induce a homomorphism $\sigma_S:C/E\xrightarrow{} J$ such that $\lambda\circ \sigma_S = \pi_S$. By Lemma \ref{dias batten 1.4}, $\sigma_S$ is surjective. Thus $(C/E,\pi_S)$ is a universal element of $C(L)$ and $C/E$ is a cover of $L$. Furthermore, each cover $K$ of $L$ is the homomorphic image of $C/E$ and has the same dimension, and so every cover is isomorphic to $C/E$. Finally, for any $(J,\lambda)\in C(L)$ and a cover $K$ of $L$, there exists a homomorphism $\beta:K\xrightarrow{} J$ such that $\lambda\circ \beta = \tau$ where $(K,\tau)\in C(L)$.
Thus $K$ is a cover of $L$ if and only if $(K,\tau)$ is a universal element in $C(L)$.

\begin{thm}\label{dias batten 1.12}
Let $L$ be a finite-dimensional diassociative algebra and let $0\xrightarrow{} R\xrightarrow{} F\xrightarrow{} L\xrightarrow{} 0$ be a free presentation of $L$. Let \[B=\frac{R}{\FR} ~~~~~~ C=\frac{F}{\FR} ~~~~~~ D=\frac{F'\cap R}{\FR}\] Then \begin{enumerate}
    \item All covers of $L$ are isomorphic and have the form $C/E$ where $E$ is the complement to $D$ in $B$.
    \item The multiplier $M(L)$ of $L$ is $D\cong B/E$.
    \item The universal elements in $C(L)$ are the elements $(K,\lambda)$ where $K$ is a cover of $L$.
\end{enumerate}
\end{thm}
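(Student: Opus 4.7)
The plan is to collect the constructions and lemmas developed throughout Section \ref{batten 1} into one coherent argument, since Theorem \ref{dias batten 1.12} is essentially a compilation of the preceding work. All three parts hinge on the interplay between universal elements in $C(L)$ and defining pairs, so I would first make that correspondence explicit: given a free presentation $0\to R\to F\xrightarrow{\pi} L\to 0$, use the constructions $B=R/(\FR)$, $C=F/(\FR)$, $D=(F'\cap R)/(\FR)$, and appeal to Lemma \ref{batten combo} to choose a subspace $E$ complementary to $D$ in $B$ with $E\subseteq \ker\overline{\sigma}$.

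For part (1), I would start from Lemma \ref{dias batten 1.5}, which already guarantees that $(C/E,B/E)$ is a defining pair for $L$, so $C/E$ is at least a candidate cover. Next, given any $(J,\lambda)\in C(L)$, the induced homomorphism $\sigma_S:C/E\to J$ satisfies $\lambda\circ \sigma_S=\pi_S$, and Lemma \ref{dias batten 1.4} forces $\sigma_S$ to be surjective. In particular, applying this to the case where $J=K$ is any cover yields $\dim K\leq \dim(C/E)$, while the maximality property of covers gives the reverse inequality; hence $\dim K=\dim(C/E)$, and surjectivity combined with equal dimensions gives $K\cong C/E$. The isomorphism $C/E\cong C/E_1$ established earlier via Lemma \ref{dias batten 1.11} (from the finite-dimensionality of $E$ and $E\cap C'=0$) shows that the construction is independent of the choice of complement.

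For part (2), since $M(L)$ is abelian and uniquely determined by the dimension of a maximal defining pair, I would simply invoke the isomorphism
\[
B/E \cong \frac{D\oplus E}{E} \cong D = \frac{F'\cap R}{\FR}
\]
arising from the complement decomposition $B=D\oplus E$, identifying the multiplier with $D$ as required. For part (3), one direction is the universal-to-cover implication already established via Lemmas \ref{dias batten 1.4} and dimension maximality. For the converse, given a cover $K$ with associated $(K,\tau)\in C(L)$, the isomorphism $K\cong C/E$ from part (1) transports the universal property of $(C/E,\pi_S)$ to $(K,\tau)$: for any $(J,\lambda)\in C(L)$, compose the isomorphism $K\to C/E$ with $\sigma_S:C/E\to J$ to obtain the required factorization.

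The only real obstacle is bookkeeping. Every nontrivial step has already been proven in the lemmas, so there is no new analytic difficulty; the challenge is to thread the independence of $E$ from $(J,\lambda)$ cleanly, so that a single $C/E$ serves as a universal element in $C(L)$ rather than a family of quotients indexed by $J$. I would emphasize that the cancellation Lemma \ref{dias batten 1.11} is what upgrades the apparent dependence of $E$ on $\overline{\sigma}$ into a genuine isomorphism class invariant, allowing the three statements to be stated in the uniform, $J$-independent form above.
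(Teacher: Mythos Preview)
Your proposal is correct and follows essentially the same approach as the paper: Theorem \ref{dias batten 1.12} is stated in the paper as a summary of the preceding lemmas without a separate proof, and your outline assembles exactly those ingredients (Lemmas \ref{dias batten 1.4}, \ref{dias batten 1.5}, \ref{batten combo}, \ref{dias batten 1.11}, and the intervening discussion) in the same logical order. The only place to be slightly more careful is in part (3): when you transport the universal property along the isomorphism $K\cong C/E$, make explicit that the isomorphism itself comes from the universal property of $(C/E,\pi_S)$ applied to $(K,\tau)$, so that it intertwines $\pi_S$ and $\tau$ and the composite $\sigma_S\circ\phi$ genuinely satisfies $\lambda\circ(\sigma_S\circ\phi)=\tau$.
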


\section{Multipliers and Cohomology}\label{batten 3}
The objective of this section is to characterize the multiplier of a diassociative algebra in terms of the second cohomology group. Given a pair of diassociative algebras $A$ and $B$, consider a central extension $0\xrightarrow{} A\xrightarrow{} L\xrightarrow{} B\xrightarrow{} 0$ of $A$ by $B$ and section $\mu:B\xrightarrow{} L$. Define a pair of bilinear forms $(f\dd,f\vv)$ by $f\dd(i,j) = \mu(i)\dashv\mu(j) - \mu(i\dashv j)$ and $f\vv(i,j) = \mu(i)\vdash\mu(j) - \mu(i\vdash j)$ for $i,j\in B$. Then the images of $f\dd$ and $f\vv$ fall in $A$ by exactness. By our work on factor systems in \cite{mainellis}, $(f\dd,f\vv)$ is a 2-cocycle of diassociative algebras, meaning that these maps satisfy the axioms of central factor systems, i.e. \begin{enumerate}
    \item[C1.] $f\dd(i,j\dashv k) = f\dd(i,j\vdash k)$,
    \item[C2.] $f\dd(i\vdash j,k) = f\vv(i,j\dashv k)$,
    \item[C3.] $f\vv(i\dashv j,k) = f\vv(i\vdash j,k)$,
    \item[C4.] $f\dd(i,j\dashv k) = f\dd(i\dashv j,k)$,
    \item[C5.] $f\vv(i,j\vdash k) = f\vv(i\vdash j,k)$
\end{enumerate} for $i,j,k\in B$. Let $\cZ^2(B,A)$ denote the set of all 2-cocycles and $\cB^2(B,A)$ denote the set of all 2-coboundaries, i.e. 2-cocycles $(f\dd,f\vv)$ such that $f\dd(i,j) = -\E(i\dashv j)$ and $f\vv(i,j) = -\E(i\vdash j)$ for some linear transformation $\E:B\xrightarrow{} A$. We next note that any elements $(f\dd,f\vv)$ and $(g\dd,g\vv)$ in $\cZ^2(B,A)$ belong to equivalent extensions if and only if their corresponding bilinear forms differ by a coboundary, i.e. if there is a linear map $\E:B\xrightarrow{} A$ such that $f\dd(i,j) - g\dd(i,j) = -\E(i\dashv j)$ and $f\vv(i,j) - g\vv(i,j) = -\E(i\vdash j)$ for all $i,j\in B$. Therefore, extensions of $A$ by $B$ are equivalent if and only if they give rise to the same element of $\cH^2(B,A) = \cZ^2(B,A)/\cB^2(B,A)$, the \textit{second cohomology group} of $B$ with coefficients in $A$. The work in \cite{mainellis} also guarantees that each element \[\overline{(f\dd,f\vv)}\in \cH^2(B,A)\] gives rise to a central extension $0\xrightarrow{} A\xrightarrow{} L\xrightarrow{} B\xrightarrow{} 0$ with section $\mu$ such that $f\dd(i,j) = \mu(i)\dashv \mu(j) - \mu(i\dashv j)$ and $f\vv(i,j) = \mu(i)\vdash \mu(j) - \mu(i\vdash j)$.

\subsection{Hochschild-Serre Spectral Sequence}
The remainder of this paper relies on the exactness of the following Hochschild-Serre type spectral sequence of low dimension. Let $H$ be a central ideal of a diassociative algebra $L$ and consider the natural central extension $0\xrightarrow{}H\xrightarrow{} L \xrightarrow{\beta}L/H\xrightarrow{} 0$ with section $\mu$ of $\beta$. Let $A$ be a central $L$-module.

\begin{thm}
The sequence \[0\xrightarrow{} \Hom(L/H,A)\xrightarrow{\Inf_1} \Hom(L,A)\xrightarrow{\Res} \Hom(H,A)\xrightarrow{\Tra} \cH^2(L/H,A)\xrightarrow{\Inf_2} \cH^2(L,A)\] is exact.
\end{thm}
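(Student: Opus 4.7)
The plan is to imitate the classical inflation-restriction five-term exact sequence, adapting the argument from the Lie and Leibniz settings to accommodate the two products $\dashv, \vdash$ and the five cocycle axioms C1--C5. I would begin by writing the four maps explicitly: $\Inf_1(f) = f \circ \beta$ for $f \in \Hom(L/H, A)$; $\Res(\psi) = \psi|_H$ for $\psi \in \Hom(L, A)$; $\Inf_2$ sends the class of $(g\dd, g\vv)$ to the class of $(g\dd \circ (\beta \times \beta),\, g\vv \circ (\beta \times \beta))$; and transgression is defined on $\phi \in \Hom(H, A)$ by
\[(\Tra \phi)\dd(i,j) = \phi(\mu(i)\dashv\mu(j) - \mu(i\dashv j)), \qquad (\Tra \phi)\vv(i,j) = \phi(\mu(i)\vdash\mu(j) - \mu(i\vdash j)),\]
which is well-posed because the arguments of $\phi$ lie in $H = \ker\beta$.

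Well-definedness requires two further checks. First, that the pair $((\Tra \phi)\dd, (\Tra \phi)\vv)$ satisfies the cocycle axioms C1--C5; each identity follows by applying $\phi$ to the corresponding diassociative identity among $\mu(i), \mu(j), \mu(k) \in L$, using centrality of $H$ to cancel terms in which an element of $H$ appears inside a single product with an element of $L$. Second, that the resulting cohomology class is independent of $\mu$: a different section $\mu'$ differs from $\mu$ by a linear map $L/H \to H$, and pushing $\phi$ through this difference produces a coboundary. For $\Inf_2$, a 2-coboundary on $L/H$ lifts to a 2-coboundary on $L$ by lifting the defining linear map along $\beta$.

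For exactness I would handle the four positions in turn. Injectivity of $\Inf_1$ is immediate from surjectivity of $\beta$. Exactness at $\Hom(L, A)$ is the standard fact that $\psi|_H = 0$ is precisely the condition for $\psi$ to factor through $L/H$. At $\Hom(H, A)$, for the inclusion $\ima\Res \subseteq \ker \Tra$, when $\phi = \psi|_H$ I would extend $\phi$ to $\psi$ on all of $L$ and expand, whence $(\Tra\phi)\dd(i,j) = \psi(\mu(i)) \dashv \psi(\mu(j)) - \psi\mu(i\dashv j) = -\psi\mu(i\dashv j)$ since $A$ has trivial products, exhibiting $\Tra\phi$ as the coboundary of $\psi \circ \mu$; for the reverse inclusion, given $\phi$ with $\Tra\phi$ represented by the coboundary of some $\E:L/H \to A$, I would set $\psi(h + \mu(b)) = \phi(h) + \E(b)$, and the two coboundary identities (one per product) are exactly what is needed to verify that $\psi$ respects $\dashv$ and $\vdash$. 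Finally, at $\cH^2(L/H, A)$, if the inflation of $(g\dd, g\vv)$ is the coboundary of some $\E: L \to A$, the coboundary relation applied to pairs in $H$ forces $\E|_{H'} = 0$, so $\phi := -\E|_H \in \Hom(H, A)$, and a direct computation shows that $\Tra\phi$ and $(g\dd, g\vv)$ differ by a coboundary built from $\E \circ \mu$.

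The main obstacle I anticipate is verifying that $\Tra\phi$ genuinely lands in $\cZ^2(L/H, A)$, i.e., that all five axioms C1--C5 hold. In the Lie and Leibniz cases one manipulates only one or two cocycle identities, but here the coupling between $\dashv$ and $\vdash$ through C2 and C4 forces careful bookkeeping over cross-terms from both products. Once the cocycle check is in hand, the remaining arguments are essentially linear-algebraic and parallel Batten's original treatment, with each step carried out symmetrically for $\dashv$ and $\vdash$.
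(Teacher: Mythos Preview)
Your proposal is correct and follows essentially the same approach as the paper: the paper defines the maps exactly as you do, checks well-definedness of $\Tra$ and $\Inf_2$ in the same way, and then verifies exactness at each node by the same constructions (in particular, defining $\sigma(\mu(\overline{x})+h)=\E(\overline{x})+\phi(h)$ for $\ker(\Tra)\subseteq\ima(\Res)$, and taking $\phi=\E|_H$ for $\ker(\Inf_2)\subseteq\ima(\Tra)$). One small remark: your observation that $\E|_{H'}=0$ is superfluous, since $H$ is central and hence abelian, so every linear map $H\to A$ is automatically a homomorphism; also be sure to record the easy inclusion $\ima(\Tra)\subseteq\ker(\Inf_2)$, which you omitted but which the paper handles by setting $\E(x)=-\chi(h_x)$.
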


We first define the maps in the sequence and verify that they make sense. For any homomorphism $\chi:L/H\xrightarrow{} A$, define $\Inf_1:\Hom(L/H,A)\xrightarrow{} \Hom(L,A)$ by $\Inf_1(\chi) = \chi\circ \beta$. Next, for $\pi\in \Hom(L,A)$, define $\Res:\Hom(L,A)\xrightarrow{} \Hom(H,A)$ by $\Res(\pi) = \pi\circ \iota$ where $\iota:H\xrightarrow{} L$ is the inclusion map. It is readily verified that $\Inf_1$ and $\Res$ are well-defined and linear. To define the transgression map, let $f\dd:L/H\times L/H\xrightarrow{} H$ and $f\vv:L/H\times L/H\xrightarrow{} H$ be defined by $f\dd(\overline{x},\overline{y}) = \mu(\overline{x})\dashv \mu(\overline{y}) - \mu(\overline{x}\dashv \overline{y})$ and $f\vv(\overline{x},\overline{y}) = \mu(\overline{x})\vdash \mu(\overline{y}) - \mu(\overline{x}\vdash \overline{y})$ for $x,y\in L$. Consider $\chi\in \Hom(H,A)$. Then $(\chi\circ f\dd, \chi\circ f\vv)\in \cZ^2(L/H,A)$ since $\chi$ is a homomorphism. Given another section $\nu$ of $\beta$, define a pair $(g\dd,g\vv)$ of bilinear forms by $g\dd(\overline{x}, \overline{y}) = \nu(\overline{x})\dashv \nu(\overline{y}) - \nu(\overline{x}\dashv \overline{y})$ and $g\vv(\overline{x}, \overline{y}) = \nu(\overline{x})\vdash \nu(\overline{y}) - \nu(\overline{x}\vdash \overline{y})$ for $x,y\in L$. Then $(f\dd,f\vv)$ and $(g\dd,g\vv)$ are cohomologous in $\cH^2(L/H,H)$, which implies that there exists a linear transformation $\E:L/H\xrightarrow{} H$ such that $f\dd(\overline{x}, \overline{y}) - g\dd(\overline{x}, \overline{y}) = -\E(\overline{x}\dashv \overline{y})$ and $f\vv(\overline{x}, \overline{y}) - g\vv(\overline{x}, \overline{y}) = -\E(\overline{x}\vdash \overline{y})$. Therefore $\chi\circ \E:L/H\xrightarrow{} A$ is a linear map by which $(\chi\circ f\dd, \chi\circ f\vv)$ and $(\chi\circ g\dd, \chi\circ g\vv)$ differ. In other words, $(\chi\circ f\dd, \chi\circ f\vv)$ and $(\chi\circ g\dd, \chi\circ g\vv)$ are cohomologous in $\cH^2(L/H,A)$, and so we define \[\Tra(\chi) = \overline{(\chi\circ f\dd, \chi\circ f\vv)}.\] It is straightforward to verify that $\Tra$ is linear.

Finally, we define the second inflation map $\Inf_2:\cH^2(L/H,A)\xrightarrow{} \cH^2(L,A)$ by \[\Inf_2((f\dd,f\vv) + \cB^2(L/H,A)) = (f\dd',f\vv') + \cB^2(L,A)\] where $f\dd'(x,y) = f\dd(\beta(x),\beta(y))$ and $f\vv'(x,y) = f\vv(\beta(x),\beta(y))$ for $(f\dd,f\vv)\in \cZ^2(L/H,A)$ and $x,y\in L$. It is straightforward to verify that $\Inf_2$ is linear. To check that $\Inf_2$ maps cocycles to cocycles, one begins by computing \begin{align*}
    f\dd'(x,y\dashv z) &= f\dd(\beta(x),\beta(y\dashv z)) \\ &= f\dd(\beta(x),\beta(y)\dashv \beta(z)) \\ &= f\dd(\beta(x),\beta(y)\vdash \beta(z)) \\ &= f\dd(\beta(x),\beta(y\vdash z)) \\ &= f\dd'(x,y\vdash z)
\end{align*} for all $x,y,z\in L$, which holds since $(f\dd,f\vv)$ is a 2-cocycle. Thus $(f\dd',f\vv')$ satisfies the first axiom of 2-cocycles. The other axioms hold by similar computations and hence $(f\dd',f\vv')\in \cZ^2(L,A)$. To check that $\Inf_2$ maps coboundaries to coboundaries, suppose $(f\dd,f\vv)\in \cB^2(L/H,A)$. Then there is a linear transformation $\E:L/H\xrightarrow{} A$ such that $f\dd(\overline{x},\overline{y}) = -\E(\overline{x}\dashv \overline{y})$ and $f\vv(\overline{x},\overline{y}) = -\E(\overline{x}\vdash \overline{y})$ for all $x,y\in L$. Here $\beta(x) = x+H = \overline{x}$ for any $x\in L$. One has \begin{align*}
    f\dd'(x,y) &= f\dd(\beta(x),\beta(y)) \\ &= -\E(\beta(x)\dashv \beta(y)) \\ &= -\E\circ\beta(x\dashv y)
\end{align*} and, similarly, $f\vv'(x,y) = -\E\circ\beta(x\vdash y)$. Therefore $(f\dd',f\vv')\in \cB^2(L,A)$.

\begin{proof}
Given our section $\mu$ of $0\xrightarrow{} H\xrightarrow{} L\xrightarrow{\beta}L/H\xrightarrow{} 0$, let $(f\dd,f\vv)\in\cZ^2(L/H,H)$ be the cocycle defined by $f\dd(\overline{x},\overline{y}) = \mu(\overline{x})\dashv \mu(\overline{y}) - \mu(\overline{x}\dashv \overline{y})$ and $f\vv(\overline{x},\overline{y}) = \mu(\overline{x})\vdash \mu(\overline{y}) - \mu(\overline{x}\vdash \overline{y})$ for $x,y\in L$. We first note that $\Inf_1$ is injective by the same logic as in \cite{mainellis batten}. Thus the sequence is exact at $\Hom(L/H,A)$. Exactness at $\Hom(L,A)$ also follows similarly to the Leibniz case.

For exactness at $\Hom(H,A)$, first consider a homomorphism $\chi\in \Hom(L,A)$. Then \begin{align*}
    \chi\circ f\dd(\overline{x},\overline{y}) &= \chi\circ\mu(\overline{x})\dashv \chi\circ\mu(\overline{y}) - \chi\circ\mu(\overline{x}\dashv\overline{y}) \\ &= -\chi\circ\mu(\overline{x}\dashv\overline{y})
\end{align*} and, similarly, $\chi\circ f\vv(\overline{x},\overline{y}) = -\chi\circ\mu(\overline{x}\vdash\overline{y})$. This implies that $(\chi\circ f\dd,\chi\circ f\vv)\in \cB^2(L/H,A)$. Thus \[\Tra(\Res(\chi)) = \Tra(\chi\circ\iota) = \overline{(\chi\circ\iota\circ f\dd,\chi\circ\iota\circ f\vv)} = 0\] and so $\ima(\Res)\subseteq \ker(\Tra)$. Conversely, suppose there exists a homomorphism $\theta:H\xrightarrow{} A$ such that \[\Tra(\theta) = \overline{(\theta\circ f\dd,\theta\circ f\vv)} = 0,\] i.e. such that $(\theta\circ f\dd,\theta\circ f\vv)\in \cB^2(L/H,A)$. Then there exists a linear transformation $\E:L/H\xrightarrow{}A$ such that $\theta\circ f\dd(\overline{x},\overline{y}) = -\E(\overline{x}\dashv \overline{y})$ and $\theta\circ f\vv(\overline{x},\overline{y}) = -\E(\overline{x}\vdash \overline{y})$. For any $x,y\in L$, we know that $x=\mu(\overline{x}) + h_x$ and $y=\mu(\overline{y}) + h_y$ for some $h_x,h_y\in H$. Thus $x\dashv y = \mu(\overline{x}\dashv \overline{y}) + h_{x\dashv y} = \mu(\overline{x})\dashv \mu(\overline{y})$ and $x\vdash y = \mu(\overline{x}\vdash \overline{y}) + h_{x\vdash y} = \mu(\overline{x})\vdash \mu(\overline{y})$, which implies that \begin{align}\label{dias chi on H}
    \begin{split}\theta(h_{x\dashv y}) = \theta(\mu(\overline{x})\dashv\mu(\overline{y}) - \mu(\overline{x}\dashv \overline{y})) = \theta\circ f\dd(\overline{x},\overline{y}) = -\E(\overline{x}\dashv \overline{y}), \\ \theta(h_{x\vdash y}) = \theta(\mu(\overline{x})\vdash\mu(\overline{y}) - \mu(\overline{x}\vdash \overline{y})) = \theta\circ f\vv(\overline{x},\overline{y}) = -\E(\overline{x}\vdash\overline{y}).\end{split}
    \end{align}
Define a linear map $\sigma:L\xrightarrow{} A$ by $\sigma(x) = \theta(h_x) + \E(\overline{x})$. Then $\sigma(x)\dashv\sigma(y) = 0$ and $\sigma(x)\vdash \sigma(y)=0$ since $\ima\sigma\subseteq A$. By (\ref{dias chi on H}), \begin{align*}
    \sigma(x\dashv y) = \theta(h_{x\dashv y}) + \E(x\dashv y) = 0,\\
    \sigma(x\vdash y) = \theta(h_{x\vdash y}) + \E(x\vdash y) = 0.
\end{align*} Thus $\sigma$ is a homomorphism. Moreover, $\sigma(h) = \theta(h) + \E(\overline{h}) = \theta(h)$ for all $h\in H$, which implies that $\Res(\sigma) = \theta$. Hence $\ker(\Tra)\subseteq \ima(\Res)$ and $\ker(\Tra) = \ima(\Res)$.

For exactness at $\cH^2(L/H,A)$, first consider a map $\chi\in \Hom(H,A)$. Then \[\Tra(\chi) = (\chi\circ f\dd,\chi\circ f\vv) + \cB^2(L/H,A)\] where $(\chi\circ f\dd,\chi\circ f\vv)\in \cZ^2(L/H,A)$. One computes \[\Inf_2((\chi\circ f\dd,\chi\circ f\vv) + \cB^2(L/H,A)) = ((\chi\circ f\dd)',(\chi\circ f\vv)') + \cB^2(L,A)\] where $(\chi\circ f\dd)'(x,y) = \chi\circ f\dd(\overline{x},\overline{y})$ and $(\chi\circ f\vv)'(x,y) = \chi\circ f\vv(\overline{x},\overline{y})$ for $x,y\in L$. To show that $\ima(\Tra)\subseteq \ker(\Inf_2)$, we need to find a linear transformation $\E:L\xrightarrow{} A$ such that $(\chi\circ f\dd)'(x,y) = -\E(x\dashv y)$ and $(\chi\circ f\vv)'(x,y) = -\E(x\vdash y)$. Let $x=\mu(\overline{x}) + h_x$ and $y=\mu(\overline{y}) + h_y$. Again, the equalities $x\dashv y = \mu(\overline{x}\dashv \overline{y}) + h_{x\dashv y} = \mu(\overline{x})\dashv \mu(\overline{y})$ and $x\vdash y = \mu(\overline{x}\vdash \overline{y}) + h_{x\vdash y} = \mu(\overline{x})\vdash \mu(\overline{y})$ yield \begin{align*}
    \chi\circ f\dd(\overline{x},\overline{y}) = \chi(\mu(\overline{x}\dashv\overline{y} - \mu(\overline{x}\dashv\overline{y})) = \chi(h_{x\dashv y}),\\ \chi\circ f\vv(\overline{x},\overline{y}) = \chi(\mu(\overline{x}\vdash\overline{y} - \mu(\overline{x}\vdash\overline{y})) = \chi(h_{x\vdash y}).
\end{align*} Define $\E(x) = -\chi(h_x)$. Then $E$ is linear and \begin{align*}
    \E(x\dashv y) = -\chi(h_{x\dashv y}) = -\chi\circ f\dd(\overline{x},\overline{y}) = -(\chi\circ f\dd)'(x,y),\\ \E(x\vdash y) = -\chi(h_{x\vdash y}) = -\chi\circ f\vv(\overline{x},\overline{y}) = -(\chi\circ f\vv)'(x,y).
\end{align*} This implies that $((\chi\circ f\dd)', (\chi\circ f\vv)') \in \cB^2(L,A)$ and hence $\ima(\Tra)\subseteq \ker(\Inf_2)$.

Conversely, suppose $(g\dd,g\vv)\in \cZ^2(L/H,A)$ is such that \[\overline{(g\dd,g\vv)}\in \ker(\Inf_2).\] Then there exists a linear transformation $\E:L\xrightarrow{} A$ such that $g\dd(\overline{x},\overline{y}) = g\dd'(x,y) = -\E(x\dashv y)$ and $g\vv(\overline{x},\overline{y}) = g\vv'(x,y) = -\E(x\vdash y)$ for all $x,y\in L$. Since $\E$ is linear, $(\E\circ f\dd,\E\circ f\vv)\in \cZ^2(L/H,A)$. As before, $x=\mu(\overline{x}) + h_x$ and $y=\mu(\overline{y}) + h_y$ for some $h_x,h_y\in H$. Therefore $x\dashv y = \mu(\overline{x})\dashv \mu(\overline{y})$ and $x\vdash y = \mu(\overline{x})\vdash\mu(\overline{y})$. Now \begin{align*}
    g\dd'(x,y) &= g\dd(\overline{x},\overline{y}) \\ &= -\E(x\dashv y)\\ &= -\E(\overline{x}\dashv\overline{y}) \\ &= - \E\circ f\dd(\overline{x},\overline{y}) -\E\circ \mu(\overline{x}\dashv\overline{y})
\end{align*} where $\E\circ \mu:L/H\xrightarrow{} A$. Similarly, $g\vv'(x,y) = -\E\circ f\vv(\overline{x},\overline{y}) - \E\circ \mu(\overline{x}\vdash \overline{y})$. Therefore \[\overline{(g\dd,g\vv)} = \overline{(-\E\circ f\dd,-\E\circ f\vv)} = -\Tra(\E)\] which implies that $\ker(\Inf_2)\subseteq \ima(\Tra)$.
\end{proof}

\subsection{Relation of Multipliers and Cohomology}
Let $L$ be a diassociative algebra and let $\F$ be considered as a central $L$-module. We will now use the Hochschild-Serre sequence to prove that $M(L)\cong \cH^2(L,\F)$. We begin by stating the following theorem, which holds similarly to its Leibniz analogue.

\begin{thm}\label{dias if tra surj}
Let $Z$ be a central ideal in $L$. Then $L'\cap Z$ is isomorphic to the image of $\Hom(Z,\F)$ under the transgression map. In particular, if $\Tra$ is surjective, then $L'\cap Z\cong \cH^2(L/Z,\F)$.
\end{thm}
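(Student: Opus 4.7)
The strategy is to apply the Hochschild--Serre sequence of Subsection 3.1 to the central extension $0 \to Z \to L \to L/Z \to 0$ with coefficients in the central $L$-module $\F$. The crucial structural observation is that, since $\F$ is central, it is an abelian diassociative algebra (all products zero); consequently any $\pi \in \Hom(L,\F)$ must satisfy $\pi(L') = 0$, whereas $\Hom(Z,\F)$ coincides with the full space of linear functionals $Z^*$ because $Z$ is itself abelian (as a central ideal).

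By exactness of the sequence at $\Hom(Z,\F)$, one has $\ker(\Tra) = \ima(\Res)$, so the first isomorphism theorem yields
\[\ima(\Tra) \;\cong\; \Hom(Z,\F)/\ima(\Res).\]
The main task is therefore to identify $\ima(\Res)$ with the annihilator of $L' \cap Z$ inside $\Hom(Z,\F)$. The forward inclusion is immediate: if $\theta = \pi|_Z$ for some $\pi \in \Hom(L,\F)$, then $\pi$ kills $L'$ and hence $\theta$ kills $L' \cap Z$. For the reverse inclusion I would take $\theta \in \Hom(Z,\F)$ with $\theta|_{L' \cap Z} = 0$, factor it through the inclusion $Z/(L' \cap Z) \hookrightarrow L/L'$, extend by zero on a chosen linear complement to obtain $\bar\pi : L/L' \to \F$, and compose with the quotient $L \to L/L'$ to get $\pi : L \to \F$. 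Since $\pi(L') = 0$ by construction, $\pi$ is automatically a diassociative algebra homomorphism into $\F$, and $\pi|_Z = \theta$ by design.

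Composing the restriction map $\Hom(Z,\F) \to \Hom(L' \cap Z,\F)$ (which is surjective because linear functionals on a subspace always extend) with the description of its kernel just established gives
\[\Hom(Z,\F)/\ima(\Res) \;\cong\; \Hom(L' \cap Z, \F) \;\cong\; L' \cap Z,\]
the second isomorphism being the standard (non-canonical) one for finite-dimensional vector spaces. Chaining these isomorphisms produces $\ima(\Tra) \cong L' \cap Z$, which is the main assertion. The \emph{in particular} statement then follows instantly: surjectivity of $\Tra$ means $\ima(\Tra) = \cH^2(L/Z,\F)$, so $L' \cap Z \cong \cH^2(L/Z,\F)$.

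The only delicate step is verifying the reverse inclusion in the identification of $\ima(\Res)$; the lifting argument depends on the observation that a homomorphism into the central module $\F$ is the same data as a linear functional on $L/L'$, which is precisely what makes the complement-and-extend construction legitimate. Everything else is bookkeeping within the exact sequence.
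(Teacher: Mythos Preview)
Your proposal is correct and follows the standard line of argument that the paper defers to (the paper itself gives no proof, stating only that the result ``holds similarly to its Leibniz analogue''). Your use of exactness at $\Hom(Z,\F)$ together with the identification of $\ima(\Res)$ as the annihilator of $L'\cap Z$ and the subsequent duality step is precisely the argument carried over from the Lie and Leibniz settings; the only implicit hypothesis you invoke---finite-dimensionality of $L'\cap Z$ for the isomorphism $(L'\cap Z)^*\cong L'\cap Z$---is consistent with the ambient assumptions of the paper.
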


Now consider a free presentation $0\xrightarrow{} R\xrightarrow{} F\xrightarrow{\U} L\xrightarrow{} 0$ of $L$. The sequence \[0\xrightarrow{} \frac{R}{\FR}\xrightarrow{} \frac{F}{\FR}\xrightarrow{} L\xrightarrow{} 0\] is a central extension since all of $R\dashv F$, $R\vdash F$, $F\dashv R$, and $F\vdash R$ are contained in $\FR$.

\begin{lem}\label{dias restriction to R}
Let $0\xrightarrow{} A\xrightarrow{} B\xrightarrow{\phi} C\xrightarrow{} 0$ be a central extension and $\alpha:L\xrightarrow{} C$ be a homomorphism. Then there exists a homomorphism $\beta:F/(\FR)\xrightarrow{} B$ such that \[\begin{tikzcd}
0\arrow[r]& \frac{R}{\FR}\arrow[r] \arrow[d, "\gamma"] & \frac{F}{\FR} \arrow[r] \arrow[d,"\beta"] & \LL\arrow[r] \arrow[d, "\alpha"] &0 \\
0\arrow[r] &A \arrow[r] & B\arrow[r] &C\arrow[r] &0
\end{tikzcd}\] is commutative, where $\gamma$ is the restriction of $\beta$ to $R/(\FR)$.
\end{lem}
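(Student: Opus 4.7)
The plan is to lift $\alpha$ stepwise: first up to the free algebra $F$ using its universal property, then show the lift descends modulo $F\lozenge R + R\lozenge F$ by exploiting centrality of the bottom extension, and finally read off $\gamma$ as the induced restriction.

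First, I would pick a basis (or free generating set) of $F$ and, for each generator $x$, choose an element $b_x\in B$ with $\phi(b_x) = \alpha\circ\U(x)$; this is possible because $\phi$ is surjective. By the universal property of the free diassociative algebra $F$, the assignment $x\mapsto b_x$ extends uniquely to a homomorphism $\sigma:F\to B$ satisfying $\phi\circ\sigma = \alpha\circ\U$. In particular, for any $r\in R$, we have $\phi(\sigma(r)) = \alpha\circ\U(r) = 0$, so $\sigma(R)\subseteq\ker\phi\cong A$.

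Next, I would check that $\sigma$ vanishes on $F\lozenge R + R\lozenge F$. For $r\in R$ and $f\in F$, the element $\sigma(r)$ lies in $A$, which is central in $B$ because the bottom row is a central extension; hence $\sigma(r)\dashv\sigma(f) = 0$, and similarly for $\vdash$, and with the roles of $r$ and $f$ swapped. Therefore $\sigma(r\dashv f) = \sigma(r\vdash f) = \sigma(f\dashv r) = \sigma(f\vdash r) = 0$, so $F\lozenge R + R\lozenge F\subseteq\ker\sigma$. This lets $\sigma$ factor through $F/(\FR)$ as a homomorphism $\beta:F/(\FR)\to B$, which is well-defined by the first isomorphism theorem for diassociative algebras.

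Finally, I would define $\gamma$ to be the restriction of $\beta$ to $R/(\FR)$; since $\sigma(R)\subseteq A$, this restriction indeed lands in $A$, giving the leftmost vertical arrow of the diagram. Commutativity of the right-hand square is the identity $\phi\circ\beta = \alpha\circ\overline{\U}$, which follows directly from $\phi\circ\sigma = \alpha\circ\U$ after passing to the quotient; commutativity of the left-hand square is automatic from the fact that $\gamma$ is by definition the restriction of $\beta$ and the horizontal maps are inclusions.

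The main potential obstacle is the second step: verifying $\sigma(F\lozenge R + R\lozenge F)=0$. However, this is immediate once one observes that $\sigma(R)$ lands in the central subalgebra $A\subseteq B$; the four diassociative products appearing in $F\lozenge R + R\lozenge F$ all have at least one factor in $R$, which $\sigma$ sends into the center, forcing the whole product to vanish. No subtler compatibility is required, so the argument is essentially a routine application of the universal property of $F$ combined with centrality.
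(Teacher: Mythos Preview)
Your proposal is correct and follows essentially the same approach as the paper: use the universal property of $F$ to produce $\sigma:F\to B$ with $\phi\circ\sigma = \alpha\circ\U$, observe $\sigma(R)\subseteq\ker\phi = A\subseteq Z(B)$, and then use centrality to kill $\FR$ so that $\sigma$ descends to $\beta$. The only differences are cosmetic---you spell out the lifting of generators and the commutativity of both squares more explicitly than the paper does.
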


\begin{proof}
Since $F$ is free, there exists a homomorphism $\sigma:F\xrightarrow{} B$ such that \[\begin{tikzcd}
 F\arrow[r, "\U"] \arrow[d, "\sigma", swap] & \LL \arrow[d,"\alpha"] \\ B \arrow[r, "\phi"] & C \end{tikzcd}\] is commutative. Let $r\in R\subseteq F$. Then $\U(r) = 0$ since $\ker \U = R$. Therefore $0=\alpha\circ \U(r) = \phi\circ\sigma(r)$ and so $\sigma(R)\subseteq \ker \phi$. We want to show that $\FR\subseteq \ker \sigma$. If $x\in F$ and $r\in R$, then \begin{align*}
     \sigma(x\dashv r) = \sigma(x)\dashv\sigma(r) = 0, && \sigma(x\vdash r) = \sigma(x)\vdash\sigma(r) = 0, \\ \sigma(r\dashv x) = \sigma(r)\dashv\sigma(x) = 0, && \sigma(r\vdash x) = \sigma(r)\vdash\sigma(x) = 0~
 \end{align*} since $\sigma(r)\in \ker \phi = A \subseteq Z(B)$. Now $\sigma$ induces a homomorphism $\beta:F/(\FR)\xrightarrow{} B$. The left diagram commutes since we may take $A\xrightarrow{} B$ to be the inclusion map.
\end{proof}

\begin{lem}\label{dias tra surj}
Let $0\xrightarrow{} R\xrightarrow{} F\xrightarrow{} L\xrightarrow{} 0$ be a free presentation of $L$. Let $A$ be a central $L$-module. Then the transgression map $\Tra:\Hom(R/(\FR),A)\xrightarrow{} \cH^2(L,A)$ associated with $0\xrightarrow{} \frac{R}{\FR}\xrightarrow{} \frac{F}{\FR}\xrightarrow{\phi} L\xrightarrow{} 0$ is surjective.
\end{lem}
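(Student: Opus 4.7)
The plan is to surject onto $\cH^2(L,A)$ by starting with an arbitrary class there, realizing it as a central extension, and then lifting through the free presentation via the previous lemma.

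First, fix a class $\overline{(f\dd,f\vv)}\in \cH^2(L,A)$. By the work of the previous subsection (extensions $\leftrightarrow$ cohomology classes), this class is realized by a central extension $0\xrightarrow{} A\xrightarrow{} B\xrightarrow{\phi} L\xrightarrow{} 0$ together with a section $\mu:L\xrightarrow{} B$ satisfying $f\dd(x,y)=\mu(x)\dashv\mu(y)-\mu(x\dashv y)$ and $f\vv(x,y)=\mu(x)\vdash\mu(y)-\mu(x\vdash y)$.

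Next, apply Lemma \ref{dias restriction to R} to this extension with $\alpha=\text{id}_L$. This produces a homomorphism $\beta:F/(\FR)\xrightarrow{} B$ making the ladder diagram commute, together with its restriction $\gamma:R/(\FR)\xrightarrow{} A$. The element $\gamma\in \Hom(R/(\FR),A)$ is the candidate preimage under $\Tra$.

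To verify $\Tra(\gamma)=\overline{(f\dd,f\vv)}$, let $\nu:L\xrightarrow{} F/(\FR)$ be a section of $\phi$, and let $(g\dd,g\vv)$ be the associated cocycle defining $\Tra$, i.e.\ $g\dd(\overline{x},\overline{y})=\nu(\overline{x})\dashv \nu(\overline{y})-\nu(\overline{x}\dashv \overline{y})$ and similarly for $g\vv$. By definition, $\Tra(\gamma)=\overline{(\gamma\circ g\dd,\gamma\circ g\vv)}$. Because $\gamma$ agrees with $\beta$ on $R/(\FR)$ and $\beta$ is a diassociative homomorphism, I would compute
\[\gamma\circ g\dd(\overline{x},\overline{y})=\beta\nu(\overline{x})\dashv\beta\nu(\overline{y})-\beta\nu(\overline{x}\dashv \overline{y}),\]
and likewise for $\vdash$. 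But $\beta\circ\nu:L\xrightarrow{} B$ is itself a section of $\phi$ (commutativity of the diagram gives $\phi\circ\beta\circ\nu=\text{id}_L$), so this pair is the cocycle arising from the extension $0\to A\to B\to L\to 0$ via the section $\beta\circ\nu$. Since any two sections of the same central extension yield cohomologous cocycles, this cocycle is cohomologous to $(f\dd,f\vv)$, giving $\Tra(\gamma)=\overline{(f\dd,f\vv)}$.

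The main obstacle is essentially notational: keeping the various sections straight and confirming that the composition $\beta\circ\nu$ really does serve as an admissible section for the extension $B$. Once this is in hand, the surjectivity of $\Tra$ is immediate since the initial class $\overline{(f\dd,f\vv)}$ was arbitrary.
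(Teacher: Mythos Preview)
Your proposal is correct and follows essentially the same route as the paper's proof: realize the given class by a central extension, lift through the free presentation via Lemma~\ref{dias restriction to R} with $\alpha=\text{id}_L$, and observe that the composite $\beta\circ\nu$ is a section of the target extension so that $\Tra(\gamma)$ recovers the original class. The only cosmetic issue is that you have reused the symbol $\phi$ for both the map $F/(\FR)\to L$ from the statement and the map $B\to L$ of your realized extension; the paper avoids this clash by naming the latter $\vp$.
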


\begin{proof}
Consider $\overline{(g\dd,g\vv)}\in \cH^2(L,A)$ and let $0\xrightarrow{} A\xrightarrow{} E\xrightarrow{\vp} L\xrightarrow{} 0$ be a central extension associated with $\overline{(g\dd,g\vv)}$. By the previous lemma, there exists a homomorphism $\theta$ such that \[\begin{tikzcd}
0\arrow[r]& \frac{R}{\FR}\arrow[r] \arrow[d, "\gamma"] & \frac{F}{\FR} \arrow[r, "\phi"] \arrow[d,"\theta"] & \LL\arrow[r] \arrow[d, "\text{id}"] &0 \\
0\arrow[r] &A \arrow[r] & E\arrow[r, "\vp"] &\LL\arrow[r] &0
\end{tikzcd}\] is commutative and $\gamma = \theta|_{R/(\FR)}$. Let $\mu$ be a section of $\phi$. Then $\vp\circ\theta\circ \mu = \phi\circ \mu = \text{id}$ and so $\theta\circ \mu$ is a section of $\vp$. Let $\lambda = \theta\circ \mu$ and define \begin{align*}
    \beta\dd(x,y) = \lambda(x)\dashv\lambda(y) - \lambda(x\dashv y), \\ \beta\vv(x,y) = \lambda(x)\vdash\lambda(y) - \lambda(x\vdash y).
\end{align*} Then $(\beta\dd,\beta\vv)\in \cZ^2(\LL,A)$ and $(\beta\dd,\beta\vv)$ is cohomologous with $(g\dd,g\vv)$ since they are associated with the same extension. One computes \begin{align*}
    \beta\dd(x,y) &= \theta(\mu(x))\dashv\theta(\mu(y)) - \theta(\mu(x\dashv y)) \\ &= \theta(\mu(x)\dashv\mu(y) - \mu(x\dashv y))\\ &= \gamma(\mu(x)\dashv\mu(y) - \mu(x\dashv y)) \\ &= \gamma(f\dd(x,y))
\end{align*} where $f\dd(x,y) = \mu(x)\dashv\mu(y) - \mu(x\dashv y)$ and since $\gamma = \theta|_{R/(\FR)}$. Similarly, one computes $\beta\vv(x,y) = \gamma(f\vv(x,y))$ for $f\vv(x,y) = \mu(x)\vdash\mu(y) - \mu(x\vdash y)$. Thus \[\Tra(\gamma) = \overline{(\gamma\circ f\dd,\gamma\circ f\vv)} = \overline{(\beta\dd,\beta\vv)} = \overline{(g\dd,g\vv)}\] and $\Tra$ is surjective.
\end{proof}

\begin{lem}\label{dias set lemma}
If $C\subseteq A$ and $C\subseteq B$, then $A/C\cap B/C = (A\cap B)/C$.
\end{lem}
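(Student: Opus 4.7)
The plan is to prove the set equality by a standard double inclusion, working with coset representatives. Since $C$ is contained in both $A$ and $B$, all three quotients $A/C$, $B/C$, and $(A\cap B)/C$ sit naturally inside an ambient quotient (for instance, $(A+B)/C$), so cosets can be compared directly.

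For the inclusion $(A\cap B)/C \subseteq A/C \cap B/C$, I would take an arbitrary element $x+C$ with $x\in A\cap B$. Since $x\in A$, the coset $x+C$ lies in $A/C$, and since $x\in B$, it also lies in $B/C$, giving the inclusion essentially by definition.

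For the reverse inclusion $A/C \cap B/C \subseteq (A\cap B)/C$, I would take $x+C \in A/C\cap B/C$, so that $x+C = a+C$ for some $a\in A$ and $x+C = b+C$ for some $b\in B$. The key step is to argue that the coset $x+C$ has a representative in $A\cap B$: from $x+C = a+C$ we get $x - a \in C \subseteq A$, hence $x = a + (x-a) \in A$, and similarly $x\in B$. Therefore $x\in A\cap B$ and $x+C \in (A\cap B)/C$.

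There is no real obstacle here; the only subtlety is the very mild one of ensuring that we are comparing cosets inside a common ambient quotient, which is automatic once we observe $C\subseteq A\cap B$. The result then follows immediately from the two inclusions above.
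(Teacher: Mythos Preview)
Your argument is correct; this is exactly the standard double-inclusion via coset representatives, and there are no gaps. The paper itself does not spell out a proof but simply defers to the Lie analogue in \cite{batten}, which amounts to the same elementary reasoning you give.
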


\begin{proof}
Follows by the same logic as the Lie analogue \cite{batten}.
\end{proof}

\begin{thm}
Let $L$ be a diassociative algebra over a field $\F$ and $0\xrightarrow{} R\xrightarrow{} F\xrightarrow{} L\xrightarrow{} 0$ be a free presentation of $L$. Then \[\cH^2(L,\F) \cong \frac{F'\cap R}{\FR}.\] In particular, if $L$ is finite-dimensional, then $M(L)\cong \cH^2(L,\F)$.
\end{thm}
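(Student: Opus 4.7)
The plan is to feed the central extension $0 \xrightarrow{} R/(\FR) \xrightarrow{} F/(\FR) \xrightarrow{} L \xrightarrow{} 0$ into the machinery already built up in this subsection, with the coefficient module $A = \F$ viewed as a central $L$-module. Noting that this really is a central extension (as observed right before Lemma~\ref{dias restriction to R}), I can invoke Theorem~\ref{dias if tra surj} with the diassociative algebra $F/(\FR)$ playing the role of $L$ and $Z = R/(\FR)$ playing the role of the central ideal.

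The hypothesis of the useful clause of Theorem~\ref{dias if tra surj} is that the transgression map is surjective, and this is precisely what Lemma~\ref{dias tra surj} delivers for a free presentation. Applying the theorem therefore gives
\[\Big(F/(\FR)\Big)' \cap \frac{R}{\FR} \;\cong\; \cH^2\!\left(\frac{F/(\FR)}{R/(\FR)},\, \F\right) \;\cong\; \cH^2(L,\F),\]
using the standard third isomorphism $(F/(\FR))/(R/(\FR)) \cong F/R \cong L$ on the right-hand side.

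It remains to rewrite the left-hand side as $(F' \cap R)/(\FR)$. Because every element of $\FR$ is by definition a product in $F$, we have $\FR \subseteq F'$, and therefore the derived subalgebra of $F/(\FR)$ is exactly $F'/(\FR)$. Lemma~\ref{dias set lemma}, applied with $C = \FR$, $A = F'$, $B = R$, then yields
\[\frac{F'}{\FR} \cap \frac{R}{\FR} \;=\; \frac{F' \cap R}{\FR},\]
which gives the desired isomorphism $\cH^2(L,\F) \cong (F' \cap R)/(\FR)$.

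For the final sentence, the assumption that $L$ is finite-dimensional puts us in the setting of Section~\ref{batten 1}, and Theorem~\ref{dias batten 1.12}(2) identifies the multiplier $M(L)$ with $(F' \cap R)/(\FR)$; combining this with the isomorphism above yields $M(L) \cong \cH^2(L,\F)$. The only nontrivial step is the identification of the derived subalgebra of $F/(\FR)$, and even that is immediate once one observes the inclusion $\FR \subseteq F'$; all the real work has been done in Lemma~\ref{dias tra surj} and Theorem~\ref{dias if tra surj}, so this proof should be essentially a short bookkeeping argument.
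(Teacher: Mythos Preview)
Your proposal is correct and follows essentially the same approach as the paper: apply Theorem~\ref{dias if tra surj} to the central extension $0 \to \overline{R} \to \overline{F} \to L \to 0$ (with $\overline{F}$ playing the role of $L$ and $\overline{R}$ the central ideal), use Lemma~\ref{dias tra surj} to get surjectivity of $\Tra$, and then invoke Lemma~\ref{dias set lemma} and Theorem~\ref{dias batten 1.12} to identify the result with $M(L)$. Your explicit observation that $\FR \subseteq F'$ justifies $(\overline{F})' = F'/(\FR)$, a step the paper leaves implicit.
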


\begin{proof}
Let $\overline{R} = \frac{R}{\FR}$ and $\overline{F} = \frac{F}{\FR}$. Then $0\xrightarrow{} \overline{R}\xrightarrow{} \overline{F}\xrightarrow{} \LL\xrightarrow{} 0$ is a central extension. By Lemma \ref{dias tra surj}, $\Tra:\Hom(\overline{R},\F)\xrightarrow{} \cH^2(\LL,\F)$ is surjective. By Theorem \ref{dias if tra surj}, \[\overline{F}'\cap \overline{R} \cong \cH^2(\overline{F}/\overline{R},\F) \cong \cH^2(\LL,\F).\] By Lemma \ref{dias set lemma}, \[\overline{F}'\cap \overline{R} \cong \frac{F'}{\FR} \cap \frac{R}{\FR} = \frac{F'\cap R}{\FR}.\] Therefore, \[M(\LL) = \frac{F'\cap R}{\FR} \cong \cH^2(\LL,\F)\] by the characterization of $M(L)$ from Theorem \ref{dias batten 1.12}.
\end{proof}

\section{Unicentral Algebras}\label{batten 4}
For a diassociative algebra $L$, let $Z^*(L)$ denote the intersection of all images $\U(Z(E))$ such that $0\xrightarrow{} \ker \U\xrightarrow{} E\xrightarrow{\U} L\xrightarrow{} 0$ is a central extension of $L$. It is easy to see that $Z^*(L)\subseteq Z(L)$. We say that a diassociative algebra $L$ is \textit{unicentral} if $Z(L) = Z^*(L)$. The objective of this section is to develop criteria for when the center of any cover of $L$ maps onto the center of $L$. One of these criteria will take the form of when $Z(L)\subseteq Z^*(L)$, i.e. of when the algebra is unicentral.

\subsection{More Sequences}
We must first extend our Hochschild-Serre sequence by a $\delta$ map analogous to that of the Leibniz case. Given a central ideal $Z$ in $L$, consider the central extension $0\xrightarrow{} Z\xrightarrow{} L\xrightarrow{} L/Z\xrightarrow{} 0$. To define $\delta$, consider a 2-cocycle $(f\dd',f\vv')\in \cZ^2(L,\F)$ and define four bilinear forms \begin{align*}
    f\dd'':L/L'\times Z\xrightarrow{} \F, && f\vv'':L/L'\times Z\xrightarrow{} \F, \\ g\dd'':Z\times L/L'\xrightarrow{} \F, && g\vv'':Z\times L/L'\xrightarrow{} \F~
\end{align*} by \begin{align*}
    f\dd''(x+L',z) = f\dd'(x,z), && f\vv''(x+L',z) = f\vv'(x,z), \\
    g\dd''(z,x+L') = f\dd'(z,x), && g\vv''(z,x+L') = f\vv'(z,x)~
\end{align*} for $x\in L$, $z\in Z$. To check that these four maps are well-defined, one computes \begin{align*}
    &f\dd''(x\dashv y+L',z) = f\dd'(x\dashv y,z) \overset{\text{C4}}{=} f\dd'(x,y\dashv z) = 0, \\& f\dd''(x\vdash y+L',z) = f\dd'(x\vdash y,z) \overset{\text{C2}}{=} f\vv'(x,y\dashv z) = 0, \\~ \\ &g\dd''(z,x\dashv y+L') = f\dd'(z,x\dashv y) \overset{\text{C4}}{=} f\dd'(z\dashv x,y) = 0, \\ &g\dd''(z,x\vdash y+L') = f\dd'(z,x\vdash y) \overset{\text{C1}}{=} f\dd'(z,x\dashv y) \overset{\text{C4}}{=} f\dd'(z\dashv x,y) = 0, \\~ \\ &f\vv''(x\dashv y+L',z) = f\vv'(x\dashv y,z) \overset{\text{C3}}{=} f\vv'(x\vdash y, z) \overset{\text{C5}}{=} f\vv'(x,y\vdash z) = 0, \\ & f\vv''(x\vdash y+L',z) = f\vv'(x\vdash y,z) \overset{\text{C5}}{=} f\vv'(x,y\vdash z) = 0, \\~ \\ & g\vv''(z,x\dashv y+L') = f\vv'(z,x\dashv y) \overset{\text{C2}}{=} f\dd'(z\vdash x,y) = 0, \\ & g\vv''(z,x\vdash y+L') = f\vv'(z,x\vdash y) \overset{\text{C5}}{=} f\vv'(z\vdash x, y) = 0
\end{align*} since $z\in Z(L)$. Hence \begin{align*}
    (f\dd'',g\dd'',f\vv'',g\vv'') &\in (\Bil(L/L'\times Z, \F)\oplus \Bil(Z\times L/L', \F))^2 \\ &\cong (L/L'\otimes Z \oplus Z\otimes L/L')^2.
\end{align*} Now let $(f\dd',f\vv')\in \cB^2(L,\F)$. Then there exists a linear transformation $\E:L\xrightarrow{} \F$ such that $f\dd'(x,y) = -\E(x\dashv y)$ and $f\vv'(x,y) = -\E(x\vdash y)$ for all $x,y\in L$. One computes \begin{align*}
    f\dd''(x+L',z) = f\dd'(x,z) = -\E(x\dashv z) = 0, && f\vv''(x+L',z) = f\vv'(x,z) = -\E(x\vdash z) = 0, \\ g\dd''(z,x+L') = f\dd'(z,x) = -\E(z\dashv x) = 0, && g\vv''(z,x+L') = f\vv'(z,x) = -\E(z\vdash x) = 0~
\end{align*} since $z\in Z(L)$. Hence, a map $\delta:(f\dd',f\vv')+\cB^2(L,\F)\mapsto (f\dd'',g\dd'',f\vv'',g\vv'')$ is induced that is clearly linear since $f\dd'$, $f\vv'$, $f\dd''$, $g\dd''$, $f\vv''$, and $g\vv''$ are all in vector spaces of bilinear forms and the latter four are defined by the first two.

\begin{thm}\label{dias batten 4.1}
Let $Z$ be a central ideal of a diassociative algebra $L$. The sequence \[\cH^2(L/Z,\F)\xrightarrow{\Inf} \cH^2(L,\F)\xrightarrow{\delta} (L/L'\otimes Z \oplus Z\otimes L/L')^2\] is exact.
\end{thm}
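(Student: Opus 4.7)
The plan is to establish exactness at $\cH^2(L,\F)$ by showing $\ima(\Inf) = \ker(\delta)$. The whole argument rests on a single observation: $\beta(z) = 0$ for every $z\in Z$, combined with the fact that $Z$ is a central ideal, so $x\dashv z$, $z\dashv x$, $x\vdash z$, and $z\vdash x$ all vanish in $L$.

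For the inclusion $\ima(\Inf)\subseteq \ker(\delta)$, I would take a representative $(h\dd,h\vv)\in \cZ^2(L/Z,\F)$, whose inflation is the cocycle $(f\dd',f\vv')$ with $f\dd'(x,y) = h\dd(\beta(x),\beta(y))$ and likewise for $f\vv'$. Plugging $z\in Z$ into either slot forces $\beta(z) = 0$, and bilinearity of $h\dd$ and $h\vv$ then makes $f\dd''$, $g\dd''$, $f\vv''$, and $g\vv''$ vanish identically. Hence $\delta\circ \Inf = 0$.

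For the reverse inclusion, suppose $[(f\dd',f\vv')]\in \ker(\delta)$. Since $\delta$ is well-defined on cohomology classes, this reads off as literal vanishing of the four associated bilinear forms, i.e. $f\dd'(x,z) = f\dd'(z,x) = f\vv'(x,z) = f\vv'(z,x) = 0$ for all $x\in L$ and $z\in Z$. Bilinearity then yields $f\dd'(x+z_1,y+z_2) = f\dd'(x,y)$ and $f\vv'(x+z_1,y+z_2) = f\vv'(x,y)$ for all $z_1,z_2\in Z$, so both forms descend to well-defined bilinear forms $h\dd,h\vv:L/Z\times L/Z\xrightarrow{}\F$ via $h\dd(\overline{x},\overline{y}) = f\dd'(x,y)$ and $h\vv(\overline{x},\overline{y}) = f\vv'(x,y)$. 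The remaining task is to confirm that $(h\dd,h\vv)$ satisfies the 2-cocycle axioms C1--C5; this is routine because $\overline{x\dashv y} = \overline{x}\dashv \overline{y}$ and $\overline{x\vdash y} = \overline{x}\vdash \overline{y}$ transport each axiom for $(f\dd',f\vv')$ directly across the quotient. By construction, $\Inf[(h\dd,h\vv)] = [(f\dd',f\vv')]$ on the nose, with no coboundary correction needed.

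The main obstacle is clerical rather than conceptual: there are four bilinear forms to track through $\delta$ instead of a single one, and five cocycle axioms to re-verify for $(h\dd,h\vv)$. Once one recognizes that the hypothesis $[(f\dd',f\vv')]\in \ker(\delta)$ forces literal vanishing on $L\times Z$ and $Z\times L$, the descent to $L/Z$ is automatic and no coboundary adjustment is required.
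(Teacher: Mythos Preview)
Your proposal is correct and follows essentially the same approach as the paper: both directions are argued exactly as you describe, by noting that inflated cocycles vanish whenever a $Z$-entry appears (since $\beta(z)=0$), and conversely that the kernel condition on $\delta$ forces $f\dd'$ and $f\vv'$ to be constant along $Z$-cosets so that they descend to a well-defined cocycle on $L/Z$ whose inflation recovers the original class without any coboundary correction. The paper's write-up differs only in notation, calling the descended cocycle $(g\dd,g\vv)$ rather than $(h\dd,h\vv)$.
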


\begin{proof}
Let $(f\dd,f\vv)\in \cZ^2(L/Z,\F)$. Then $\Inf((f\dd,f\vv) + \cB^2(L/Z, \F)) = (f\dd',f\vv') + \cB^2(L,\F)$ where $f\dd'(x,y) = f\dd(x+Z,y+Z)$ and $f\vv'(x,y) = f\vv(x+Z,y+Z)$ for $x,y\in L$. Moreover, \[\delta((f\dd',f\vv')+\cB^2(L,\F)) = (f\dd'',g\dd'',f\vv'',g\vv'')\] where
\begin{align*}
    f\dd''(x+L',z) = f\dd'(x,z) = f\dd(x+Z,z+Z) &= 0, \\
    g\dd''(z,x+L') = f\dd'(z,x) = f\dd(z+Z,x+Z) &= 0, \\
    f\vv''(x+L',z) = f\vv'(x,z) = f\vv(x+Z,z+Z) &= 0, \\
    g\vv''(z,x+L') = f\vv'(z,x) = f\vv(z+Z,x+Z) &= 0
\end{align*} for all $x\in L$ and $z\in Z$. Thus \begin{align*}
    \delta(\Inf((f\dd,f\vv) + \cB^2(L/Z,\F))) &= \delta((f\dd',f\vv')+\cB^2(L,\F)) \\ &= (f\dd'',g\dd'',f\vv'',g\vv'') \\ &= (0,0,0,0)
\end{align*} which implies that $\ima(\Inf)\subseteq \ker \delta$.

Conversely, suppose $\delta((f\dd',f\vv')+\cB^2(L,\F)) = (f\dd'',g\dd'',f\vv'',g\vv'') = (0,0,0,0)$ for some cocycle $(f\dd',f\vv')\in \cZ^2(L,\F)$. In other words, \begin{align*}
    0 = f\dd''(x+L',z) = f\dd'(x,z), && 0 = f\vv''(x+L',z) = f\vv'(x,z),\\
    0 = g\dd''(z,x+L') = f\dd'(z,x), && 0 = g\vv''(z,x+L') = f\vv'(z,x)~
\end{align*} for all $x\in L$, $x\in Z$. Hence \begin{align*}
    f\dd'(x+z,y+z') = f\dd'(x,y) + f\dd'(x,z') + f\dd'(z,y) + f\dd'(z,z') = f\dd'(x,y),\\ f\vv'(x+z,y+z') = f\vv'(x,y) + f\vv'(x,z') + f\vv'(z,y) + f\vv'(z,z') = f\vv'(x,y)~
\end{align*} for all $z,z'\in Z$, which implies that the bilinear forms $g\dd:L/Z\times L/Z\xrightarrow{} \F$ and $g\vv:L/Z\times L/Z\xrightarrow{} \F$, defined by $g\dd(x+Z,y+Z) = f\dd'(x,y)$ and $g\vv(x+Z,y+Z) = f\vv'(x,y)$, are well-defined. Furthermore, $(g\dd,g\vv)\in \cZ^2(L/Z,\F)$ since $(f\dd',f\vv')$ is a cocycle. Thus \[\Inf((g\dd,g\vv) + \cB^2(L/Z,\F)) = (f\dd',f\vv')+\cB^2(L,\F),\] which implies that $\ker \delta \subseteq \ima(\Inf)$.
\end{proof}

One of our criteria will involve this $\delta$ map. Another will involve the \textit{natural map} $\beta$ that appears in the following analogue of the Ganea sequence.

\begin{thm}\label{dias batten 4.2}
(Ganea Sequence) Let $Z$ be a central ideal of a finite-dimensional diassociative algebra $L$. Then the sequence \[(L/L'\otimes Z \oplus Z\otimes L/L')^2 \xrightarrow{} M(L) \xrightarrow{} M(L/Z)\xrightarrow{} L'\cap Z\xrightarrow{} 0\] is exact.
\end{thm}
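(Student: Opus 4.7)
The plan is to deduce the Ganea sequence by dualizing, in the finite-dimensional sense, a composite of the Hochschild-Serre sequence from Section~\ref{batten 3} and the $\delta$-extension of Theorem~\ref{dias batten 4.1}, then transporting the result along the isomorphism $M(-)\cong\cH^2(-,\F)$ established in Section~\ref{batten 3}.

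First I would splice the two sequences. Taking $H=Z$ and $A=\F$ in the Hochschild-Serre theorem gives exactness at $\cH^2(L/Z,\F)$, while Theorem~\ref{dias batten 4.1} gives exactness at $\cH^2(L,\F)$. Together they produce an exact sequence
\[
\Hom(Z,\F)\xrightarrow{\Tra}\cH^2(L/Z,\F)\xrightarrow{\Inf}\cH^2(L,\F)\xrightarrow{\delta}\bigl(L/L'\x Z\oplus Z\x L/L'\bigr)^2.
\]
Theorem~\ref{dias if tra surj} identifies $\ima(\Tra)$ with $L'\cap Z$, so this restricts to an exact sequence
\[
0\to L'\cap Z \to \cH^2(L/Z,\F)\xrightarrow{\Inf}\cH^2(L,\F)\xrightarrow{\delta}\bigl(L/L'\x Z\oplus Z\x L/L'\bigr)^2.
\]

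Since $L$ is finite-dimensional, every term in this sequence is a finite-dimensional $\F$-vector space. Dualizing over $\F$ preserves exactness and reverses arrows, yielding
\[
\bigl(L/L'\x Z\oplus Z\x L/L'\bigr)^2\xrightarrow{\delta^*}\cH^2(L,\F)^*\xrightarrow{\Inf^*}\cH^2(L/Z,\F)^*\twoheadrightarrow L'\cap Z \to 0,
\]
where I have already folded in the non-canonical self-duality of finite-dimensional vector spaces for the two outer terms. Combined with the isomorphism $M(-)\cong\cH^2(-,\F)$ from Section~\ref{batten 3} (again up to choosing an identification $\cH^2(-,\F)^*\cong\cH^2(-,\F)\cong M(-)$), this translates into the claimed Ganea sequence.

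The main obstacle will be verifying that, under these translations, the dual map $\Inf^*$ coincides with the natural map $\beta:M(L)\to M(L/Z)$ that is advertised before the theorem and used afterwards in the unicentral analysis. Concretely, given a cover $(K,M(L))$ of $L$, I would examine the preimage $\widetilde{Z}$ of $Z$ in $K$: this produces a defining pair for $L/Z$, and the resulting map into a cover of $L/Z$ induces the expected map on multipliers, which one must match to the dual of inflation at the level of cocycles or covers. A parallel identification is required for $\delta^*$ to recover the announced first map $(L/L'\x Z\oplus Z\x L/L')^2\to M(L)$. Once these naturality checks are in hand, the theorem follows from the dualized exact sequence above.
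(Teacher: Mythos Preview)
Your dualization argument is sound as far as it goes, and it is a genuinely different route from the paper's. The paper works directly with a free presentation: writing $L=F/R$ and $Z=T/R$, it identifies $M(L)\cong (R\cap F')/(\FR)$ and $M(L/Z)\cong (T\cap F')/(F\lozenge T+T\lozenge F)$, defines $\beta$ and $\gamma$ as the maps induced by the inclusions $R\cap F'\hookrightarrow T\cap F'\hookrightarrow T\cap(F'+R)$, and builds the leftmost map $\overline{\theta}$ explicitly from the four multiplication pairings $(\overline{f},\overline{t})\mapsto f\ast t+(\FR)$ and $(\overline{t},\overline{f})\mapsto t\ast f+(\FR)$. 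Exactness is then verified by direct calculation of kernels and images. Your approach instead gets exactness for free from the Hochschild--Serre and $\delta$-sequences, at the price of making the maps implicit.

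That price is exactly the ``main obstacle'' you flag, and it is not a formality. The Ganea sequence is used downstream (the two lemmas preceding Theorem~\ref{dias batten 4.9}, and Theorem~\ref{dias batten 4.9} itself) through the condition ``$\beta$ is injective,'' where $\beta$ is the specific inclusion-induced map above. Your identifications $\cH^2(-,\F)^*\cong M(-)$ and the self-duality of the tensor term are non-canonical as stated, so you cannot simply declare that $\Inf^*$ transports to this $\beta$. To close the gap you would need the \emph{canonical} duality $\cH^2(L,\F)\cong \Hom(M(L),\F)$ together with its naturality in $L$; that is available, but setting it up essentially revisits the free-presentation description of $M(L)$ and the surjectivity of $\Tra$ from Lemma~\ref{dias tra surj}, at which point the effort is comparable to the paper's direct argument. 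In short: your route proves the theorem as literally stated, but the paper's construction is what makes $\beta$ usable afterwards, and your proposal leaves that identification as an unfulfilled promise.
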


\begin{proof}
Let $F$ be a free diassociative algebra such that $L=F/R$ and $Z=T/R$ for ideals $T$ and $R$ of $F$. Since $Z\subseteq Z(L)$, we have $T/R\subseteq Z(F/R)$, and thus $F\lozenge T + T\lozenge F\subseteq R$. Now inclusion maps $\overline{\beta}:R\cap F'\xrightarrow{} T\cap F'$ and $\overline{\gamma}:T\cap F'\xrightarrow{} T\cap (F'+R)$ induce homomorphisms \[\frac{R\cap F'}{\FR}\xrightarrow{\beta} \frac{T\cap F'}{F\lozenge T+T\lozenge F} \xrightarrow{\gamma} \frac{T\cap (F'+R)}{R}\xrightarrow{} 0.\] Since $R\subseteq T$, \[\frac{T\cap (F'+R)}{R} = \frac{(T+R)\cap (F'+R)}{R} \cong \frac{(T\cap F')+R}{R}\] which implies that $\gamma$ is surjective. By Theorem \ref{dias batten 1.12}, \[M(L)\cong \frac{R\cap F'}{\FR} \hspace{.75cm} \text{ and }\hspace{.75cm} M(L/Z)\cong \frac{T\cap F'}{F\lozenge T + T\lozenge F}.\] Also \[L'\cap Z\cong (F/R)'\cap (T/R)\cong \frac{F'+R}{R}\cap \frac{T}{R} \cong \frac{(F'+R)\cap T}{R}.\] Therefore the sequence $M(L/Z)\xrightarrow{\gamma} L'\cap Z\xrightarrow{} 0$ is exact. Since \[\ker \gamma = \frac{(T\cap F')\cap R}{F\lozenge T + T\lozenge F} = \frac{R\cap F'}{F\lozenge T + T\lozenge F} = \ima \beta,\] the sequence $M(L)\xrightarrow{\beta} M(L/Z)\xrightarrow{\gamma} L'\cap Z$ is exact.

It remains to show that $(L/L'\otimes Z \oplus Z\otimes L/L')^2 \xrightarrow{} M(L) \xrightarrow{\beta} M(L/Z)\xrightarrow{} L'\cap Z$ is exact. Define four maps \begin{align*}
    \theta\dd:\frac{T}{R}\times \frac{F}{R+F'}\xrightarrow{} \frac{R\cap F'}{\FR}, && \theta\vv:\frac{T}{R}\times \frac{F}{R+F'}\xrightarrow{} \frac{R\cap F'}{\FR},\\ \alpha\dd:\frac{F}{R+F'}\times, \frac{T}{R}\xrightarrow{} \frac{R\cap F'}{\FR}, && \alpha\vv:\frac{F}{R+F'}\times \frac{T}{R}\xrightarrow{} \frac{R\cap F'}{\FR}~
\end{align*} by \begin{align*}
    \theta\dd(\overline{t},\overline{f}) = t\dashv f+ (\FR), && \theta\vv(\overline{t},\overline{f}) = t\vdash f + (\FR), \\ \alpha\dd(\overline{f},\overline{t}) = f\dashv t + (\FR), && \alpha\vv(\overline{f},\overline{t}) = f\vdash t+ (\FR)~
\end{align*} for $t\in T$, $f\in F$. These maps are bilinear since multiplication operations are bilinear. To check that they are well-defined, suppose $(t+R, f+(R+F')) = (t'+R, f'+(R+F'))$ for $t,t'\in T$ and $f,f'\in F$. Then $t-t'\in R$ and $f-f'\in R+F'$, which implies that $t=t'+r$ and $f=f'+x$ for some $r\in R$ and $x\in R+F'$. One computes \begin{align*}
    t\dashv f - t'\dashv f' &= (t'+r)\dashv (f'+x) - t'\dashv f' \\ &= r\dashv f' + r\dashv x + t'\dashv x \\ &\in (R\dashv F) + (R\dashv F) + (T\dashv R + T\dashv F')
\end{align*} which is contained in $\FR$ since \begin{align*}
    T\dashv F' &= T\dashv (F\dashv F) + T\dashv (F\vdash F) \\ &= (T\dashv F)\dashv F + T\dashv (F\dashv F) \\ &= (T\dashv F)\dashv F + (T\dashv F)\dashv F
\end{align*} and $T\dashv F\subseteq R$. Next, \begin{align*} t\vdash f - t'\vdash f' &= (t'+r)\vdash (f'+x) - t'\vdash f' \\ &= r\vdash f' + r\vdash x + t'\vdash x \\ &\in (R\vdash F) + (R\vdash F) + (T\vdash R + T\vdash F')
\end{align*} which is also contained in $\FR$ since \begin{align*}
    T\vdash F' &= T\vdash(F\dashv F) + T\vdash (F\vdash F) \\ &= (T\vdash F)\dashv F + (T\vdash F)\vdash F
\end{align*} and $T\vdash F\subseteq R$. Expressions $f\dashv t - f'\dashv t'$ and $f\vdash t - f'\vdash t'$ fall in $\FR$ by similar manipulations, i.e. via the identities of diassociative algebras and the fact that $F\dashv T$, $F\vdash T$, $T\dashv F$, and $T\vdash F$ are contained in $R$. Thus our bilinear forms $\theta\dd$, $\alpha\dd$, $\theta\vv$, and $\alpha\vv$ are well-defined, and so induce linear maps \begin{align*}
    \overline{\theta\dd}:\frac{T}{R}\otimes \frac{F}{R+F'}\xrightarrow{} \frac{R\cap F'}{\FR}, && \overline{\theta\vv}:\frac{T}{R}\otimes \frac{F}{R+F'}\xrightarrow{} \frac{R\cap F'}{\FR}, \\ \overline{\alpha\dd}:\frac{F}{R+F'}\otimes \frac{T}{R}\xrightarrow{} \frac{R\cap F'}{\FR}, && \overline{\alpha\vv}:\frac{F}{R+F'}\otimes \frac{T}{R}\xrightarrow{} \frac{R\cap F'}{\FR}.
\end{align*} These, in turn, yield a linear transformation \[\overline{\theta}:\Big(\frac{F}{R+F'}\otimes \frac{T}{R} \oplus \frac{T}{R}\otimes \frac{F}{R+F'}\Big)^2 \xrightarrow{} \frac{R\cap F'}{\FR}\] defined by $\overline{\theta}(a,b,c,d) = \overline{\alpha\dd}(a) + \overline{\theta\dd}(b) + \overline{\alpha\vv}(c) + \overline{\theta\vv}(d)$. The image of $\overline{\theta}$ is \[\frac{F\lozenge T+T\lozenge F}{\FR}\] which is precisely equal to $\{x + (\FR)~|~ x\in R\cap F',~ x\in F\lozenge T + T\lozenge F\} = \ker \beta$. Thus the final part of our sequence is exact.
\end{proof}

\begin{cor}
(Stallings Sequence) Let $Z$ be a central ideal in a finite-dimensional diassociative algebra $L$. Then the sequence \[M(L)\xrightarrow{} M(L/Z)\xrightarrow{} Z\xrightarrow{} L/L'\xrightarrow{} \frac{L}{Z+L'}\xrightarrow{} 0\] is exact.
\end{cor}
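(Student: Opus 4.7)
The plan is to deduce the Stallings sequence by extending the Ganea sequence from Theorem \ref{dias batten 4.2} with two natural maps on the right-hand end, replacing the tail $L'\cap Z \xrightarrow{} 0$ by the longer sequence $L'\cap Z \hookrightarrow Z \xrightarrow{} L/L' \xrightarrow{} L/(Z+L') \xrightarrow{} 0$ and checking that exactness is preserved at every junction.

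Concretely, I would first invoke Theorem \ref{dias batten 4.2} to get exactness of
\[M(L)\xrightarrow{} M(L/Z)\xrightarrow{\gamma} L'\cap Z\xrightarrow{} 0.\]
Then I would define the map $M(L/Z)\xrightarrow{} Z$ to be the composition of $\gamma$ with the inclusion $\iota:L'\cap Z\hookrightarrow Z$. Since $\iota$ is injective, $\ker(\iota\circ\gamma) = \ker\gamma$, which by the Ganea sequence equals the image of $M(L)\xrightarrow{} M(L/Z)$. This gives exactness at $M(L/Z)$. Next, take the map $Z\xrightarrow{} L/L'$ to be the natural one sending $z\mapsto z+L'$ (i.e.\ inclusion into $L$ followed by the canonical projection). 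Its kernel is $Z\cap L' = L'\cap Z$, which coincides with $\ima(\iota\circ\gamma)$ by Ganea exactness at $L'\cap Z$; hence the sequence is exact at $Z$.

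For the remaining two positions, let the map $L/L'\xrightarrow{} L/(Z+L')$ be the canonical projection induced by $L'\subseteq Z+L'$. Its kernel is $(Z+L')/L'$, which is precisely the image of the map $Z\xrightarrow{} L/L'$ defined above, giving exactness at $L/L'$. Finally, this canonical projection is surjective, which gives exactness at $L/(Z+L')$.

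There is no real obstacle here: each new segment is a standard first-isomorphism-type identification and the only content is to observe that, by construction, the image of each new map agrees with the kernel of the next. The corollary is therefore essentially a formal repackaging of Theorem \ref{dias batten 4.2}, and the proof reduces to carefully identifying the four subspaces $\ker\gamma$, $L'\cap Z$, $(Z+L')/L'$, and $L/(Z+L')$ in the appropriate ambient spaces.
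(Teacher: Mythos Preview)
Your argument is correct: composing the Ganea map $\gamma$ with the inclusion $L'\cap Z\hookrightarrow Z$, then appending the natural maps $Z\to L/L'\to L/(Z+L')$, gives exactness at each node by the elementary identifications you list. This is a genuinely different route from the paper's. The paper does not derive the Stallings sequence from the Ganea sequence; instead it reruns the free-presentation argument directly (writing $L=F/R$, $Z=T/R$, and tracking the quotients $\frac{R\cap F'}{\FR}$, $\frac{T\cap F'}{F\lozenge T+T\lozenge F}$, $\frac{T}{R}$, $\frac{F}{R+F'}$, $\frac{F}{T+F'}$), simply transcribing the Leibniz proof with the indicated symbol replacements. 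Your approach is cleaner and more modular---once Theorem \ref{dias batten 4.2} is in hand, nothing diassociative-specific remains and only first-isomorphism-theorem bookkeeping is needed. The paper's approach has the minor advantage of making the map $M(L/Z)\to Z$ explicit in terms of the free presentation, but at the cost of repeating work already absorbed into the Ganea proof.
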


\begin{proof}
Follows by the same logic as the Leibniz case with the replacements $\FR$ for $FR+RF$ and $F\lozenge T+T\lozenge F$ for $FT+TF$.
\end{proof}

\subsection{The Main Result}
Let $Z$ be a central ideal in a finite-dimensional diassociative algebra $L$. We will prove the equivalence of the following statements: \begin{enumerate}
    \item $\delta$ is the trivial map,
    \item the natural map $\beta$ is injective,
    \item $M(L)\cong \frac{M(L/Z)}{L'\cap Z}$,
    \item $Z\subseteq Z^*(L)$.
\end{enumerate} The following two lemmas form the equivalence of our first three statements. Both hold similarly to their Leibniz analogues.

\begin{lem}
Let $Z$ be a central ideal in a finite-dimensional diassociative algebra $L$ and consider the map $\delta:M(L)\xrightarrow{} (L/L'\otimes Z\oplus Z\otimes L/L')^2$ as defined in Theorem \ref{dias batten 4.1}. Then \[M(L)\cong \frac{M(L/Z)}{L'\cap Z}\] if and only if $\delta$ is the trivial map.
\end{lem}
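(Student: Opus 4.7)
The plan is to splice the extended Hochschild--Serre sequence of Theorem \ref{dias batten 4.1} onto the tail of the Hochschild--Serre sequence of Section \ref{batten 3}, yielding the exact chunk
\[\Hom(Z,\F)\xrightarrow{\Tra}\cH^2(L/Z,\F)\xrightarrow{\Inf}\cH^2(L,\F)\xrightarrow{\delta}(L/L'\otimes Z\oplus Z\otimes L/L')^2.\]
Theorem \ref{dias if tra surj} identifies $\ima\Tra$ with $L'\cap Z$ inside $\cH^2(L/Z,\F)\cong M(L/Z)$, while exactness at $\cH^2(L/Z,\F)$ gives $\ker\Inf=\ima\Tra$. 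The first isomorphism theorem applied to $\Inf$ therefore furnishes the natural identification
\[\frac{M(L/Z)}{L'\cap Z}\;\cong\;\ima\Inf\;\subseteq\;\cH^2(L,\F)\cong M(L),\]
where $L'\cap Z$ is viewed as a subspace of $M(L/Z)$ via $\Tra$. This display is really all one needs.

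For ($\Leftarrow$), if $\delta$ is trivial then exactness at $\cH^2(L,\F)$ forces $\ima\Inf=\ker\delta=\cH^2(L,\F)$, and the display collapses immediately to $M(L)\cong M(L/Z)/(L'\cap Z)$. For ($\Rightarrow$), assume $M(L)\cong M(L/Z)/(L'\cap Z)$. Since $L$ is finite-dimensional, so is every space in sight, and counting dimensions in the display yields $\dim\ima\Inf=\dim M(L)=\dim\cH^2(L,\F)$. Because $\ima\Inf$ already lies in $\cH^2(L,\F)$, equality of dimensions promotes to $\ima\Inf=\cH^2(L,\F)$, hence $\ker\delta=\cH^2(L,\F)$ and $\delta$ is trivial.

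The only genuine obstacle is bookkeeping: one must confirm that the inflation map in Theorem \ref{dias batten 4.1} coincides with the inflation map at the tail of the Section \ref{batten 3} sequence, and that the copy of $L'\cap Z$ by which we quotient in the lemma's statement is precisely $\ima\Tra$. Both facts follow from the observation that every map in play is constructed from the single central extension $0\to Z\to L\to L/Z\to 0$, so beyond this identification I do not expect any substantive difficulty.
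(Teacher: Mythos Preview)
Your proof is correct and follows the natural route implied by the paper, which itself omits the argument and defers to the Leibniz analogue. Splicing the Hochschild--Serre sequence with the $\delta$-extension of Theorem \ref{dias batten 4.1}, identifying $\ima\Tra\cong L'\cap Z$ via Theorem \ref{dias if tra surj}, and then comparing dimensions in the finite-dimensional setting is exactly the expected argument and presumably matches the referenced Leibniz proof.
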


\begin{lem}
Let $Z$ be a central ideal in a finite-dimensional diassociative algebra $L$ and consider the natural map $\beta:M(L)\xrightarrow{} M(L/Z)$ as defined in Theorem \ref{dias batten 4.2}. Then \[M(L)\cong \frac{M(L/Z)}{L'\cap Z}\] if and only if $\beta$ is injective.
\end{lem}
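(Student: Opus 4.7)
The plan is to read off the equivalence directly from the Ganea sequence of Theorem~\ref{dias batten 4.2} using nothing more than a dimension count. The relevant tail of that sequence is
\[M(L) \xrightarrow{\beta} M(L/Z) \xrightarrow{\gamma} L'\cap Z \xrightarrow{} 0,\]
which tells me that $\gamma$ is surjective with $\ima\beta = \ker\gamma$. In the finite-dimensional setting this gives
\[\dim M(L/Z) \;=\; \dim(\ima\beta) + \dim(L'\cap Z),\]
and rank--nullity applied to $\beta$ gives $\dim M(L) = \dim(\ker\beta) + \dim(\ima\beta)$. These two identities are the only inputs needed.

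For the forward implication, I would assume $\beta$ is injective, so that $\dim M(L) = \dim(\ima\beta)$, and combine this with the displayed identity to obtain $\dim M(L) = \dim M(L/Z) - \dim(L'\cap Z)$. Since the multiplier of a finite-dimensional diassociative algebra is abelian, i.e.\ just an $\F$-vector space, its isomorphism class is determined by dimension, so I conclude $M(L) \cong M(L/Z)/(L'\cap Z)$. Conversely, assuming $M(L) \cong M(L/Z)/(L'\cap Z)$, the same dimension count yields $\dim M(L) = \dim(\ima\beta)$, and then the surjection $\beta:M(L)\twoheadrightarrow\ima\beta$ between finite-dimensional vector spaces of equal dimension must have trivial kernel.

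There is no substantial obstacle: Theorem~\ref{dias batten 4.2} has already done the work of establishing exactness, and the remainder is an essentially formal linear-algebra argument. The only mild subtlety is interpreting $M(L/Z)/(L'\cap Z)$ (there is no canonical embedding of $L'\cap Z$ into $M(L/Z)$), but because multipliers are abelian this quotient is meaningful up to isomorphism as a vector space of the appropriate dimension. As anticipated by the paper's remark, the argument mirrors both its Leibniz analogue and the preceding $\delta$-version almost verbatim, with $\gamma$ playing the role that $\delta$ played there.
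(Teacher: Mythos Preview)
Your argument is correct and is precisely the dimension-counting argument from the Ganea sequence that the paper defers to when it says the lemma ``holds similarly to its Leibniz analogue.'' The only point worth noting is your own observation about the meaning of $M(L/Z)/(L'\cap Z)$: since $\gamma$ in the Ganea sequence is a surjection $M(L/Z)\twoheadrightarrow L'\cap Z$ of abelian algebras, the quotient is to be read as $M(L/Z)/\ker\gamma = M(L/Z)/\ima\beta$, which makes the equivalence with injectivity of $\beta$ immediate just as you describe.
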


It remains to show that these conditions are equivalent to $Z\subseteq Z^*(L)$. As in \cite{mainellis batten}, a central extension $0\xrightarrow{} A\xrightarrow{} B\xrightarrow{} C\xrightarrow{} 0$ is called \textit{stem} if $A\subseteq B'$. Consider a free presentation $0\xrightarrow{} R\xrightarrow{} F\xrightarrow{\pi} L\xrightarrow{} 0$ of $L$ and let $\overline{X}$ denote the quotient algebra $\frac{X}{\FR}$ for any $X$ such that $\FR\subseteq X\subseteq F$. Since $R=\ker \pi$ and $\FR\subseteq R$, $\pi$ induces a homomorphism $\overline{\pi}:\overline{F}\xrightarrow{} L$ such that the diagram \[\begin{tikzcd}
F\arrow[r,"\pi"]\arrow[d]& L\\
\overline{F} \arrow[ur, swap, "\overline{\pi}"]
\end{tikzcd}\] commutes. Since $\overline{R}\subseteq Z(\overline{F})$, there exists a complement $\frac{S}{\FR}$ to $\frac{R\cap F'}{\FR}$ in $\frac{R}{\FR}$ where $S\subseteq R\subseteq \ker \pi$ and $\overline{S}\subseteq \overline{R}\subseteq \ker\overline{\pi}$. Thus $\overline{\pi}$ induces a homomorphism $\pi_S:F/S\xrightarrow{} L$ such that the extension $0\xrightarrow{} R/S\xrightarrow{} F/S\xrightarrow{\pi_S} L\xrightarrow{} 0$ is central. This extension is stem since $R/S\cong \frac{R\cap F'}{\FR} = \ker \pi_S$ implies that $F/S$ is a cover of $L$.

\begin{lem}\label{dias center in center}
For every free presentation $0\xrightarrow{} R\xrightarrow{} F\xrightarrow{\pi} L\xrightarrow{} 0$ of $L$ and every central extension $0\xrightarrow{}\ker \U \xrightarrow{} E\xrightarrow{\U} L\xrightarrow{} 0$, one has $\overline{\pi}(Z(\overline{F}))\subseteq \U(Z(E))$.
\end{lem}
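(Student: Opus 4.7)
The plan is to exploit the freeness of $F$ to produce a lift of $\pi$ to $E$, push this lift through the quotient by $F\lozenge R + R\lozenge F$ to obtain a map $\overline{F}\to E$, and then verify that this map sends $Z(\overline{F})$ into $Z(E)$. Commutativity with $\omega$ then forces $\overline{\pi}(Z(\overline{F}))\subseteq \omega(Z(E))$.

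More precisely, since $F$ is free and $\omega:E\to L$ is surjective, the universal property yields a homomorphism $\sigma:F\to E$ with $\omega\circ\sigma=\pi$. For any $r\in R$, $\omega(\sigma(r))=\pi(r)=0$, so $\sigma(R)\subseteq \ker\omega$. Because the extension $0\to\ker\omega\to E\xrightarrow{\omega} L\to 0$ is central, $\ker\omega$ annihilates $E$ under both $\dashv$ and $\vdash$. Hence for any $f\in F$ and $r\in R$, each of $\sigma(f\dashv r),\sigma(f\vdash r),\sigma(r\dashv f),\sigma(r\vdash f)$ is zero, exactly as in the proof of Lemma~\ref{dias batten 1.6}. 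Therefore $F\lozenge R+R\lozenge F\subseteq \ker\sigma$, and $\sigma$ descends to a homomorphism $\overline{\sigma}:\overline{F}\to E$ satisfying $\omega\circ\overline{\sigma}=\overline{\pi}$.

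Next I would fix $\overline{f}\in Z(\overline{F})$ and show $\overline{\sigma}(\overline{f})\in Z(E)$. Let $e\in E$ be arbitrary. Since $\omega$ is surjective and $\overline{\pi}$ is surjective (because $\pi$ is), there exists $f'\in F$ with $\omega(e)=\pi(f')=\omega(\sigma(f'))$, so $e-\sigma(f')\in\ker\omega$. Using that $\ker\omega$ lies in the annihilator, we obtain
\[
\overline{\sigma}(\overline{f})\dashv e \;=\; \overline{\sigma}(\overline{f})\dashv \sigma(f') \;=\; \overline{\sigma}(\overline{f}\dashv \overline{f'}) \;=\; \overline{\sigma}(0) \;=\; 0,
\]
since $\overline{f}\in Z(\overline{F})$ annihilates $\overline{f'}$. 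The other three products $e\dashv \overline{\sigma}(\overline{f})$, $\overline{\sigma}(\overline{f})\vdash e$, and $e\vdash\overline{\sigma}(\overline{f})$ vanish by identical manipulations. Thus $\overline{\sigma}(\overline{f})\in Z(E)$, and so $\overline{\pi}(\overline{f})=\omega(\overline{\sigma}(\overline{f}))\in\omega(Z(E))$, giving the desired inclusion.

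The main obstacle is simply being careful that centrality transfers correctly: one must exploit both that $\ker\omega\subseteq Z(E)$ acts as an annihilator and that every element of $E$ differs from something in $\sigma(F)$ by an element of $\ker\omega$. Once these two ingredients are combined, the central condition on $\overline{f}$ in $\overline{F}$ translates directly into centrality of $\overline{\sigma}(\overline{f})$ in $E$, and the conclusion follows with no further work.
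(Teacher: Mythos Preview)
Your argument is correct and is essentially the paper's own proof: the paper invokes Lemma~\ref{dias restriction to R} to obtain the map $\beta:\overline{F}\to E$ (your $\overline{\sigma}$), records the decomposition $E=\ker\omega+\beta(\overline{F})$, and then checks that $\beta(Z(\overline{F}))$ centralizes each summand---exactly the per-element verification you carry out. The only cosmetic difference is that you reconstruct the lift $\sigma$ and its descent by hand rather than citing Lemma~\ref{dias restriction to R}.
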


\begin{proof}
Since the identity map $\text{id}:L\xrightarrow{} L$ is a homomorphism, we can invoke Lemma \ref{dias restriction to R}, yielding a homomorphism $\beta:\overline{F}\xrightarrow{} E$ such that the diagram \[\begin{tikzcd}
0\arrow[r]& \frac{R}{\FR}\arrow[r] \arrow[d, "\gamma"] & \frac{F}{\FR} \arrow[r, "\overline{\pi}"] \arrow[d,"\beta"] & L\arrow[r] \arrow[d, "\text{id}"] &0 \\
0\arrow[r] &\ker \U \arrow[r] & E\arrow[r, "\U"] &L\arrow[r] &0
\end{tikzcd}\] is commutative (where $\gamma$ is the restriction of $\beta$ to $\overline{R}$). As in the Leibniz case, we obtain $E=A+\beta(\overline{F})$ where $A=\ker \U$. It remains to show that $\beta(Z(\overline{F}))$ centralizes $A$ and $\beta(\overline{F})$. We first compute $\beta(Z(\overline{F}))\ast\beta(\overline{F}) = \beta(Z(\overline{F})\ast\overline{F}) = \beta(0) = 0$ and $\beta(\overline{F})\ast\beta(Z(\overline{F})) = \beta(\overline{F}\ast Z(\overline{F})) = \beta(0) = 0$, where $\ast$ ranges over $\dashv$ and $\vdash$. Next, we know that $A\ast E$ and $E\ast A$ are zero, and so $A\ast\beta(Z(\overline{F}))$ and $\beta(Z(\overline{F}))\ast A$ must be zero as well. Thus $\beta(Z(\overline{F}))$ centralizes $E$, which implies that $\beta(Z(\overline{F}))\subseteq Z(E)$. Therefore $\U\circ \beta(Z(\overline{F}))\subseteq \U(Z(E))$, or $\overline{\pi}(Z(\overline{F}))\subseteq \U(Z(E))$.
\end{proof}

The rest of the paper follows similarly to its Leibniz case via with simple replacements. By applying the preceding results and above discussion (concerning $F/S$ as a cover), one thereby obtains the main result from Chapter 4 of \cite{batten} for the diassociative setting.

\begin{thm}\label{dias batten 4.7}
For every free presentation $0\xrightarrow{} R\xrightarrow{} F\xrightarrow{\pi} L\xrightarrow{} 0$ of $L$ and every stem extension $0\xrightarrow{} \ker \U\xrightarrow{} E\xrightarrow{\U} L\xrightarrow{} 0$, one has $Z^*(L) = \overline{\pi}(Z(\overline{F})) = \U(Z(E))$.
\end{thm}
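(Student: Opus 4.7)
The plan is to prove both equalities in turn, combining Lemma \ref{dias center in center} with the cover machinery of Section \ref{batten 1}. For the equality $Z^*(L) = \overline{\pi}(Z(\overline{F}))$, the central extension $0 \to \overline{R} \to \overline{F} \to L \to 0$ (whose centrality was noted earlier, just before Lemma \ref{dias restriction to R}) appears among the extensions defining $Z^*(L)$, giving $Z^*(L) \subseteq \overline{\pi}(Z(\overline{F}))$. Lemma \ref{dias center in center} supplies $\overline{\pi}(Z(\overline{F})) \subseteq \U(Z(E))$ for every central extension, so intersecting over all such extensions yields the reverse inclusion.

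For the equality $\U(Z(E)) = \overline{\pi}(Z(\overline{F}))$ with $(E, \U)$ a stem extension, the direction $\supseteq$ is again Lemma \ref{dias center in center}. The first substantive step is to verify equality in the cover case $E = F/S$: given $z \in Z(F/S)$, lift $z$ to some $y \in \overline{F}$ via the quotient $q : \overline{F} \to \overline{F}/\overline{S} = F/S$. Centrality of $z$ forces $q(y \ast x) = 0 = q(x \ast y)$ for all $x \in \overline{F}$ and $\ast \in \{\dashv, \vdash\}$, so $y \ast \overline{F}$ and $\overline{F} \ast y$ both lie in $\ker q = \overline{S}$. These commutators automatically lie in $\overline{F}'$, and the complementarity $\overline{R} = \overline{R \cap F'} \oplus \overline{S}$ combined with $\overline{R} \cap \overline{F}' = \overline{R \cap F'}$ (Lemma \ref{dias set lemma}) forces $\overline{S} \cap \overline{F}' = 0$. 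Hence $y \in Z(\overline{F})$ and $\pi_S(z) = \overline{\pi}(y) \in \overline{\pi}(Z(\overline{F}))$, giving $\pi_S(Z(F/S)) = \overline{\pi}(Z(\overline{F}))$.

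For a general stem extension $(E, \U)$, stemness places $(E, \U) \in C(L)$ (central plus $\ker \U \subseteq E'$), so Theorem \ref{dias batten 1.12} provides a surjection $\sigma_S : F/S \to E$ with $\U \circ \sigma_S = \pi_S$. The containment $\sigma_S(Z(F/S)) \subseteq Z(E)$ then gives $\U(Z(E)) \supseteq \pi_S(Z(F/S)) = \overline{\pi}(Z(\overline{F}))$, and the reverse inclusion follows along the Leibniz template of \cite{mainellis batten}, using the stem condition $\ker \U \subseteq E'$ to control how elements of $Z(E)$ lift through $\sigma_S$. The principal obstacle lies precisely here: a preimage $u \in F/S$ of $z \in Z(E)$ only satisfies $u \ast (F/S) \subseteq \ker \sigma_S$, and because $\ker \sigma_S \subseteq M(L)$ is central in $F/S$, modifying $u$ within its coset preserves these commutators, so the clean complementarity argument used for the cover does not transplant verbatim. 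One must invoke the stem property in the Leibniz manner, with the diassociative bracket $F \lozenge R + R \lozenge F$ replacing the Leibniz bracket $FR + RF$ (and similar replacements for the $T$-variants) to bridge the setting.
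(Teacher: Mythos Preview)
Your proposal is correct and follows essentially the same route as the paper: both arguments use Lemma~\ref{dias center in center} to get $Z^*(L)=\overline{\pi}(Z(\overline{F}))$, treat the cover $F/S$ via the complementarity $\overline{S}\cap\overline{F}'=0$, and then defer the general stem case to the Leibniz template in \cite{mainellis batten} with the routine replacement of $FR+RF$ by $F\lozenge R+R\lozenge F$. Your write-up is in fact more explicit than the paper's, which simply announces that the result follows from the Leibniz analogue; the ``obstacle'' you flag in the final paragraph is exactly the point at which the stem hypothesis $\ker\omega\subseteq E'$ is invoked in that template, so there is no genuine gap.
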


\begin{lem}
Let $Z$ be a central ideal of finite-dimensional diassociative algebra $L$ and let \[\beta:M(L)\xrightarrow{} M(L/Z)\] be as in Theorem \ref{dias batten 4.2}. Then $Z\subseteq Z^*(L)$ if and only if $\beta$ is injective.
\end{lem}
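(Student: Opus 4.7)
The plan is to reduce both conditions to the same inclusion of ideals in the free presentation, using Theorems \ref{dias batten 4.2} and \ref{dias batten 4.7} as the two endpoints. Fix a free presentation $0\xrightarrow{} R\xrightarrow{} F\xrightarrow{\pi} L\xrightarrow{} 0$ and let $T=\pi^{-1}(Z)$, so that $Z=T/R$. Since $Z$ is central in $L$, one has $F\lozenge T+T\lozenge F\subseteq R$, and trivially $F\lozenge T+T\lozenge F\subseteq F'$. Revisiting the Ganea computation from the proof of Theorem \ref{dias batten 4.2}, the kernel of $\beta$ is \[\ker\beta=\frac{(R\cap F')\cap (F\lozenge T+T\lozenge F)}{\FR}=\frac{F\lozenge T+T\lozenge F}{\FR}.\] Therefore $\beta$ is injective if and only if $F\lozenge T+T\lozenge F\subseteq \FR$ (the reverse inclusion being automatic from $R\subseteq T$).

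Next, I would invoke Theorem \ref{dias batten 4.7} with the stem extension $0\xrightarrow{} R/S\xrightarrow{} F/S\xrightarrow{\pi_S} L\xrightarrow{} 0$ constructed before that theorem, giving $Z^*(L)=\pi_S(Z(F/S))$. An element $t+R\in Z$ lies in $\pi_S(Z(F/S))$ precisely when there exists $f\in t+R\subseteq T$ with $f\lozenge F+F\lozenge f\subseteq S$. A key preliminary step is to pin down $S\cap F'$: because $S/(\FR)$ is complementary to $(R\cap F')/(\FR)$ in $R/(\FR)$ and $\FR\subseteq S\cap F'$, the complementarity forces $S\cap F'=\FR$. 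Since any $f\in T$ automatically satisfies $f\lozenge F+F\lozenge f\subseteq F'\cap R$, the condition $f\lozenge F+F\lozenge f\subseteq S$ becomes $f\lozenge F+F\lozenge f\subseteq S\cap F'=\FR$.

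To finish, I would observe that replacing $f$ by $f+r$ for $r\in R$ leaves $f\lozenge F+F\lozenge f$ unchanged modulo $\FR$, so the existence of a suitable $f\in t+R$ is equivalent to $t\lozenge F+F\lozenge t\subseteq \FR$ for the original $t$. Ranging $t$ over $T$, this gives $Z\subseteq Z^*(L)$ if and only if $F\lozenge T+T\lozenge F\subseteq \FR$, which is exactly the injectivity condition on $\beta$ established in the first paragraph.

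The main obstacle I anticipate is the bookkeeping needed to extract $S\cap F'=\FR$ from the complementarity defining $S$ and to verify that the shift argument in the last paragraph does not introduce unwanted terms outside $\FR$; both steps hinge on the fact that $T$ and $R$ interact correctly with $F$ under $\dashv$ and $\vdash$, and they are the places where the diassociative (as opposed to Lie or Leibniz) identities need to be used with care, though once the set-level identifications are nailed down the rest is formal.
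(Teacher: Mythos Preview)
Your argument is correct. The paper itself gives no proof here, deferring entirely to the Leibniz analogue, but your reduction of both conditions to the single inclusion $F\lozenge T+T\lozenge F\subseteq \FR$ via Theorems \ref{dias batten 4.2} and \ref{dias batten 4.7} is exactly the natural line and matches what the Leibniz proof does.

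One small simplification is available: Theorem \ref{dias batten 4.7} also gives $Z^*(L)=\overline{\pi}(Z(\overline{F}))$, and membership of $\overline{f}$ in $Z(\overline{F})$ is \emph{already} the condition $f\lozenge F+F\lozenge f\subseteq \FR$. Going through $\overline{F}$ rather than $F/S$ therefore bypasses the $S\cap F'=\FR$ computation entirely, and the shift argument proceeds unchanged. Your route via $\pi_S$ is still fine; it just does a little extra work. Also, your concern in the final paragraph about the diassociative identities is unwarranted here: those identities were already absorbed into the proof of Theorem \ref{dias batten 4.2} when showing $\overline{\theta}$ is well-defined, and the present lemma uses only the resulting ideal-theoretic identifications.
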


\begin{thm}\label{dias batten 4.9}
Let $Z$ be a central ideal of a finite-dimensional diassociative algebra $L$ and let \[\delta:M(L)\xrightarrow{} (L/L'\otimes Z\oplus Z\otimes L/L')^2\] be as in Theorem \ref{dias batten 4.1}. Then the following are equivalent:
\begin{enumerate}
    \item $\delta$ is the trivial map,
    \item the natural map $\beta$ is injective,
    \item $M(L)\cong \frac{M(L/Z)}{L'\cap Z}$,
    \item $Z\subseteq Z^*(L)$.
\end{enumerate}
\end{thm}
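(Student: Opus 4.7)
The plan is to obtain the four-way equivalence as an essentially formal consequence of the three lemmas immediately preceding Theorem \ref{dias batten 4.9} in this subsection. Each of those lemmas supplies one pairwise equivalence among the four statements, and the resulting graph of biconditionals is already connected, which is enough to deduce all six biconditionals by transitivity.

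First I would invoke the lemma concerning $\delta$, which asserts that $M(L)\cong M(L/Z)/(L'\cap Z)$ holds if and only if $\delta$ is trivial; this is precisely (1) $\Leftrightarrow$ (3). Second I would invoke the lemma concerning the natural map $\beta$, which asserts that $M(L)\cong M(L/Z)/(L'\cap Z)$ holds if and only if $\beta$ is injective; this is (2) $\Leftrightarrow$ (3). Chaining these through the common vertex (3) yields (1) $\Leftrightarrow$ (2) $\Leftrightarrow$ (3). Finally, the lemma stated immediately before the theorem gives (2) $\Leftrightarrow$ (4), namely that $Z\subseteq Z^*(L)$ if and only if $\beta$ is injective, and appending this to the chain produces (1) $\Leftrightarrow$ (2) $\Leftrightarrow$ (3) $\Leftrightarrow$ (4).

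The substantive mathematics has already been done in the three lemmas, which rest on the extended Hochschild-Serre type sequence of Theorem \ref{dias batten 4.1}, the Ganea sequence of Theorem \ref{dias batten 4.2}, and the identification $Z^*(L)=\overline{\pi}(Z(\overline{F}))=\U(Z(E))$ from Theorem \ref{dias batten 4.7}. Consequently there is no genuine obstacle in the theorem itself; the proof is a short bookkeeping argument that collects the three lemmas into a single equivalence statement, in direct analogy with Batten's main result in Chapter 4 of \cite{batten}. The only point requiring care is to verify that none of the cited lemmas implicitly assumes the very equivalence being proved, which a quick inspection confirms: each of the three lemmas is established on its own via the explicit exact sequences listed above, so the chaining argument is non-circular.
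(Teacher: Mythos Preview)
Your proposal is correct and matches the paper's approach exactly: the paper does not give a separate proof of Theorem \ref{dias batten 4.9} at all, treating it instead as the immediate collation of the three preceding lemmas (giving (1)$\Leftrightarrow$(3), (2)$\Leftrightarrow$(3), and (2)$\Leftrightarrow$(4), respectively), which is precisely the chaining you describe.
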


We conclude this section by narrowing our focus to when the conditions of Theorem \ref{dias batten 4.9} hold for $Z=Z(L)$. Under this assumption, the center of the cover goes onto the center of the algebra.

\begin{thm}
Let $L$ be a diassociative algebra and $Z(L)$ be the center of $L$. If $Z(L)\subseteq Z^*(L)$, then $\U(Z(E)) = Z(L)$ for every stem extension $0\xrightarrow{} \ker \U\xrightarrow{} E\xrightarrow{\U} L\xrightarrow{} 0$.
\end{thm}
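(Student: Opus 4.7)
The plan is to derive this statement as a direct consequence of Theorem \ref{dias batten 4.7} together with the general fact (noted at the start of Section \ref{batten 4}) that $Z^*(L) \subseteq Z(L)$ for every diassociative algebra $L$.

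First, I would recall that $Z^*(L)$ was defined as the intersection of all images $\omega(Z(E))$ ranging over central extensions $0 \to \ker\omega \to E \to L \to 0$. Since any stem extension is in particular central, each $\omega(Z(E))$ appearing in the hypothesis already contains $Z^*(L)$ by definition of an intersection. On the other hand, Theorem \ref{dias batten 4.7} gives the reverse containment in the stem case: for every stem extension $0 \to \ker\omega \to E \xrightarrow{\omega} L \to 0$, one has $\omega(Z(E)) = Z^*(L)$. So the key identity $\omega(Z(E)) = Z^*(L)$ is free.

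Next I would combine this with the standing containment $Z^*(L) \subseteq Z(L)$ and the hypothesis $Z(L) \subseteq Z^*(L)$, which together force $Z^*(L) = Z(L)$. Plugging this into $\omega(Z(E)) = Z^*(L)$ yields $\omega(Z(E)) = Z(L)$, which is exactly the conclusion.

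There is no real obstacle here; the statement is essentially a repackaging of Theorem \ref{dias batten 4.7} under the unicentral hypothesis. The only thing to be careful about is to cite Theorem \ref{dias batten 4.7} in the stem (not merely central) setting, since only for stem extensions do we get the equality $\omega(Z(E)) = Z^*(L)$ rather than just the containment $Z^*(L) \subseteq \omega(Z(E))$.
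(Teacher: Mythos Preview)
Your proposal is correct and matches the paper's approach: the paper does not write out a proof for this final theorem but states that it follows from the preceding results (in particular Theorem \ref{dias batten 4.7}) exactly as in the Leibniz case, and your argument---combining $\omega(Z(E)) = Z^*(L)$ from Theorem \ref{dias batten 4.7} with the hypothesis $Z(L)\subseteq Z^*(L)$ and the standing containment $Z^*(L)\subseteq Z(L)$---is precisely that deduction.
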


\section*{Acknowledgements}
The author would like to thank Ernest Stitzinger for the many helpful discussions.

\end{document}